\newcommand{\C}{{\mathbb C}}
\newcommand{\Reynolds}{{\mathcal R}}
\newcommand{\E}{{\mathbb E}}
\newcommand{\EE}{{\mathcal E}}
\newcommand{\FF}{{\mathcal F}}
\newcommand{\Z}{{\mathbb Z}}
\newcommand{\N}{{\mathbb N}}
\newcommand{\SL}{\operatorname{SL}}
\newcommand{\GL}{\operatorname{GL}}
\newcommand{\Q}{{\mathbb Q}}
\newcommand{\Ker}{\operatorname{Ker}}
\theoremstyle{theorem}
\newtheorem{theorem}{Theorem}[section]
\newtheorem{corollary}[theorem]{Corollary}
\newtheorem{conjecture}[theorem]{Conjecture}
\newtheorem{proposition}[theorem]{Proposition}
\newtheorem{Remark}[theorem]{Remark}
\theoremstyle{definition}
\newtheorem{definition}[theorem]{Definition}
\newtheorem{remark}[theorem]{Remark}
\newcommand{\R}{{\mathbb R}}
\title[Gaussian Moments Conjecture]{The Gaussian Moments Conjecture\\ and the Jacobian Conjecture}
 \author{Harm Derksen, Arno Van den Essen and Wenhua Zhao}      
 \date{\today}
\address{H. Derksen, Department of Mathematics, University of Michigan,  USA. Email: hderksen@umich.edu}
\address{A. van den Essen, Department of Mathematics, Radboud University Nijmegen, The Netherlands. Email: A.vandenEssen@math.ru.nl}
\address{W. Zhao, Department of Mathematics, Illinois State University, Normal, IL 61761. Email: wzhao@ilstu.edu}
\begin{document}

\begin{abstract}
We first propose what we call the {\it Gaussian Moments Conjecture}.  
We then show that the Jacobian Conjecture follows from the Gaussian Moments Conjecture. Note that the the Gaussian Moments Conjecture is a special 
case of (\cite[Conjecture $3.2$]{Zhao2}). The latter conjecture was  
referred as {\it Moment Vanishing Conjecture} in (\cite[Conjecture A]{FPYZ}) and {\it Integral Conjecture} in \cite[Conjecture $3.1$]{EZ} (for the one-dimensional case).  
We also give a counter-example to 
show that (\cite[Conjecture 3.2]{Zhao2}) 
fails in general for polynomials 
in more than two variables. 
\end{abstract}

\keywords{Gaussian Moments Conjecture, the Jacobian conjecture, the Image conjecture, MZ spaces (Mathieu subspaces)}
   
\subjclass[2000]{14R15, 60E05, 33C45}



%
%



\thanks{The first author was partially supported by NSF grant DMS $1302032$ and the third author   partially  
by the Simons Foundation Grant-$278638$.}

 \bibliographystyle{alpha}
    \maketitle


\renewcommand{\theequation}{\thesection.\arabic{equation}}
\setcounter{equation}{0}
\setcounter{section}{0}

\section{\bf Introduction}
 For a random variable $X$ we denote its expected value by $\E(X)$. Suppose that $X=(X_1,\dots,X_n)$ is a random vector with a multi-variate normal distribution.
 We make the following conjecture:

\begin{conjecture}[Gaussian Moments Conjecture $\operatorname{\bf GMC}(n)$]
Suppose that $P(x_1,\dots,x_n)\in \C[x_1,\dots,x_n]$ is a complex-valued polynomial such that the moments $\E(P(X)^m)$ 
are equal to $0$ for all $m\geq 1$. Then for every polynomial $Q(x_1,\dots,x_n)\in \C[x_1,\dots,x_n]$ we have $\E(P(X)^mQ(X))=0$ for $m\gg 0$.
\end{conjecture}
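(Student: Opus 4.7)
The plan is to reformulate the conjecture in terms of generating functions. The hypothesis $\E(P(X)^m)=0$ for all $m\geq 1$ is equivalent to the formal power series identity
$$\E(e^{tP(X)})=1,$$
while the conclusion is equivalent to the statement that $\E(e^{tP(X)}Q(X))$ is a polynomial in $t$ of degree at most some $M=M(Q)$. First I would reduce to the case where $X$ has the standard multivariate normal distribution, by applying an invertible affine change of variables that diagonalizes the covariance; both hypothesis and conclusion are preserved.

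Under this normalization, Gaussian integration by parts gives the adjoint identity
$$\E(x_i F(X))=\E(\partial_i F(X))$$
for every $F\in\C[x_1,\dots,x_n]$ and every $i$, so the first-order operators $\delta_i:=x_i-\partial_i$ map $\C[x_1,\dots,x_n]$ into $\Ker\E$. Expanding in the Hermite basis $\{H_\alpha\}_{\alpha\in\N^n}$ (for which $\E(H_0)=1$ and $\E(H_\alpha)=0$ for $\alpha\neq 0$), the functional $\E$ becomes extraction of the constant Hermite coefficient. The hypothesis says this constant coefficient of $P^m$ vanishes for every $m\geq 1$, and we must show the same holds for $P^m Q$ for $m\gg 0$.

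The key step, and the principal obstacle, is to extract from the vanishing of $\E(P^m)$ a structural property of $P$ strong enough to force the eventual vanishing of the mixed moments. Concretely, I would seek, for $m$ sufficiently large, a representation
$$P(x)^m Q(x)=\sum_{i=1}^n \delta_i G_i^{(m)}(x)$$
with $G_i^{(m)}\in\C[x_1,\dots,x_n]$, which immediately yields $\E(P^m Q)=0$. Equivalently, one wants to show that $\Ker\E$ is a Mathieu subspace of $\C[x_1,\dots,x_n]$; this is the Image Conjecture flavored content of the claim. I expect this step to require a delicate argument: handling the one-variable case first via explicit Hermite recurrences and (if possible) a factorization of $P$ forced by the hypothesis, then bootstrapping by induction on $n$ using conditional expectations $\E(\,\cdot\mid X_1,\dots,X_{n-1})$, which preserve polynomiality and commute with integration in the remaining variable. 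Since the Gaussian Moments Conjecture implies the Jacobian Conjecture, no essentially formal manipulation can suffice, and substantive new input is needed precisely at the construction of the $G_i^{(m)}$.
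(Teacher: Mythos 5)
The statement you are addressing is, in this paper, an \emph{open conjecture}: the authors do not prove $\operatorname{\bf GMC}(n)$; they prove only special cases (the case $n=1$, homogeneous polynomials in two variables, polynomials homogeneous in each pair of variables, polynomials with nonzero constant term, etc.) and show that the full conjecture would imply the Jacobian Conjecture. Your proposal likewise does not prove it. The reductions you make --- normalizing $X$ to the standard Gaussian, rewriting $\E$ as extraction of the constant Hermite coefficient, observing that the Gaussian integration-by-parts operators $\delta_i=x_i-\partial_i$ map $\C[x_1,\dots,x_n]$ into $\Ker\E$ --- are correct and consistent with how the paper situates the conjecture (as the Hermite-polynomial case of the Image Conjecture), but they are purely a reformulation of the problem. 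The entire mathematical content lies in the step you yourself label ``the key step, and the principal obstacle,'' namely producing polynomials $G_i^{(m)}$ with $P^mQ=\sum_{i=1}^n\delta_i G_i^{(m)}$ for $m\gg 0$, and for that step you offer no argument. So there is a genuine gap: the proof is missing, not merely incomplete in details.

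Two further cautions about the route you sketch. First, the one-variable case is degenerate and cannot serve as the base of a useful induction: Proposition 4.2 of the paper shows that for $n=1$ the hypothesis $\E(P(X)^m)=0$ for all $m\geq 1$ already forces $P=0$, whereas the genuinely hard situation begins at $n=2$, where there is a large supply of nonzero polynomials with all Gaussian moments vanishing (for instance $P=(x_1+ix_2)^2$, and more generally any $P$ whose ${\rm O}_n(\C)$-orbit closure contains $0$, as in Section 5 of the paper). Second, the proposed induction via conditional expectations does not obviously close: the inductive hypothesis in $n-1$ variables concerns powers $R^m$ of a single fixed polynomial $R$, but $\E(P^m\mid X_1,\dots,X_{n-1})$ is not of that form, so the Mathieu--Zhao property in fewer variables cannot be applied to it directly. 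The paper's own counterexample (Proposition 1.2) to the more general moment vanishing conjecture over other domains shows that any correct argument must exploit the Gaussian measure in an essential way; your $\delta_i$ identity is a Gaussian-specific ingredient, but as you concede, the substantive input required to construct the $G_i^{(m)}$ is precisely what is absent.
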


By using translations and linear maps, we can normalize the random vector $X$ such that $X_1,\dots,X_n$ are independent, with mean 0 and variance 1.

The {\it Gaussian Moments Conjecture} is a special case of \cite[Conjecture $3.2$]{Zhao2}. Furthermore, because of Proposition $3.3$  
and relation $(3.2)$ in \cite{Zhao2}, the {\it Gaussian Moments Conjecture}  
is the special case of \cite[Conjecture $3.1$]{Zhao2} for Hermite polynomials. 
Note that (\cite[Conjecture 3.2]{Zhao2}) was later referred as {\it Moment Vanishing Conjecture} in (\cite[Conjecture A]{FPYZ}), and {\it Integral Conjecture} in \cite[Conjecture $3.1$]{EZ} (for one-dimensional case). Unfortunately, this conjecture is false in general, as can be seen from the following  

\begin{proposition}\label{theo:MVCis False}
Let $B=\{(x, y)\in \R^2 \,|\,y\geq 0,\, x^2+y^2\leq 1\}$, $P(x,y)=(x+iy)^2$ and
$Q(x,y)=x+iy$. Then $\int_B P(x,y)^m\,dxdy=0$ for all $m\geq 1$, but $\int_B  Q(x,y)P(x,y)^m\,dxdy\neq 0$ for all $m\geq 1$.
\end{proposition}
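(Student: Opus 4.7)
The plan is to prove both claims by a direct calculation in polar coordinates, which is perfectly suited to $B$ (the upper half of the closed unit disk) and to the fact that $P$ and $Q$ are powers of $x+iy$.

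First I would substitute $x = r\cos\theta$, $y = r\sin\theta$, so that $x+iy = re^{i\theta}$ and $dx\,dy = r\,dr\,d\theta$, with the region $B$ corresponding exactly to $0 \le r \le 1$, $0 \le \theta \le \pi$. Then $P(x,y)^m = r^{2m}e^{2im\theta}$ and the integral factors as a product of a radial and an angular integral:
\[
\int_B P^m\, dx\,dy \;=\; \Bigl(\int_0^1 r^{2m+1}\,dr\Bigr)\Bigl(\int_0^\pi e^{2im\theta}\,d\theta\Bigr).
\]
For $m \ge 1$ the angular factor equals $(e^{2im\pi}-1)/(2im) = 0$ since $2m$ is an even integer, which gives the first claim immediately.

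For the second claim, the integrand becomes $Q \cdot P^m = r^{2m+1}e^{i(2m+1)\theta}$, so
\[
\int_B Q P^m\, dx\,dy \;=\; \Bigl(\int_0^1 r^{2m+2}\,dr\Bigr)\Bigl(\int_0^\pi e^{i(2m+1)\theta}\,d\theta\Bigr).
\]
The radial factor is $1/(2m+3) \neq 0$. For the angular factor, since $2m+1$ is odd, $e^{i(2m+1)\pi} = -1$, giving $(-1-1)/(i(2m+1)) = 2i/(2m+1) \neq 0$. Both factors are nonzero, so the whole integral is nonzero.

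There is no real obstacle here: the only delicate point is bookkeeping the parity of the exponent, namely that $P$ contributes an even power of $e^{i\theta}$ (killing the $[0,\pi]$ integral) while the extra factor $Q$ shifts the exponent to odd (reviving it). I would present the two computations together and conclude with the explicit nonzero value $\int_B QP^m\,dx\,dy = 2i/\bigl((2m+1)(2m+3)\bigr)$ to make the asymmetry transparent.
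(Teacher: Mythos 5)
Your proposal is correct and follows essentially the same route as the paper: polar coordinates on the upper half-disk, with the radial and angular integrals factoring so that the even exponent $2m$ kills the angular integral over $[0,\pi]$ while the odd exponent $2m+1$ does not. Your final value $\int_B QP^m\,dx\,dy = 2i/\bigl((2m+1)(2m+3)\bigr)$ agrees with the paper's.
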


\begin{proof} For each $m\ge 1$, by using the polar coordinates $(r, \theta)$ we have 
$$
 \int_B P(x,y)^m\,dxdy=\int_{0}^1\int_{0}^{\pi} r^{2m}e^{2mi\theta}r\,dr d\theta=0;
$$
\begin{align*}
 \int_B Q(x,y)P(x,y)^m\,dxdy&=\int_{0}^1\int_{0}^{\pi}r^{2m+1}e^{(2m+1)i\theta}r\,dr  d\theta\\
&=\frac{2i}{(2m+3)(2m+1)}\neq 0.
\end{align*}
\end{proof}

\begin{remark}
Note that Conjecture $3.2$ 
in \cite{Zhao2} is still open for univariate polynomials. It is also open for 
the (whole) disks or squares centered at the origin for polynomials in two variables.   
\end{remark}

\begin{remark}
 The function $X_1^2+X_2^2$ has an exponential distribution and more generally, $X_1^2+\cdots+X_{2k}^2$ has a $\chi^2$ distribution. So, if the Gaussian Moments Conjecture is true for all $n\ge 1$,
then the conjecture is also true when we replace the Gaussian distributions by exponential or $\chi^2$ distributions.
The  Moments Conjecture  for exponential distributions is equivalent to~\cite[Conjecture 4.1]{EWZ}, which is a weaker form of the  Factorial Conjecture (\cite[Conjecture 4.2]{EWZ}).
\end{remark}

One of the main open conjectures in affine algebraic geometry is the notorious Jacobian Conjecture, 
which was first proposed by O. H. Keller \cite{K} in $1939$. See also \cite{BCW} and \cite{E1}. 

\begin{conjecture}[Jacobian Conjecture ${\bf JC}(n)$] If $F:\C^n\to \C^n$ is a polynomial map that is locally invertible, then it is globally invertible.
\end{conjecture}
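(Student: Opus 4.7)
The plan is to deduce $\operatorname{JC}(n)$ from $\operatorname{GMC}(n)$, following the program developed in the third author's earlier work. The strategy has three stages: a classical reduction of $\operatorname{JC}(n)$ to a Laplacian vanishing statement for a very restricted class of polynomials; translation of that statement into Gaussian moment identities via the heat semigroup; and invocation of $\operatorname{GMC}(n)$.

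In stage one, by the reductions of Bass--Connell--Wright, Yagzhev, and Dru\.zkowski, $\operatorname{JC}(n)$ for all $n$ is equivalent to proving invertibility of Keller maps of the symmetric form $F=X-\nabla P$, where $P\in\C[x_1,\dots,x_n]$ is a homogeneous polynomial of degree $4$ whose Hessian matrix is nilpotent. By the Meng--Zhao characterization of Hessian nilpotency, such $P$ satisfies $\Delta^m(P^m)\equiv 0$ for every $m\ge 1$, and Zhao's inverse formula for symmetric $F$ shows that invertibility is equivalent to the \emph{Vanishing Conjecture}: for every such $P$ and every $Q\in\C[x_1,\dots,x_n]$, $\Delta^m(P^m Q)=0$ for all $m\gg 0$.

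The bridge to Gaussian moments is the heat-semigroup identity
\[
\E\bigl(f(X)\bigr)=\bigl(e^{\Delta/2}f\bigr)(0)=\sum_{k\ge 0}\frac{\Delta^k f(0)}{2^k k!},
\]
valid for any polynomial $f$ and $X\sim N(0,I_n)$. Applied with $f=P^m$ and $P$ homogeneous of degree $4$, only the $k=2m$ term contributes (by degree parity), so $\E(P(X)^m)$ is a nonzero scalar multiple of $\Delta^{2m}(P^m)(0)$; Hessian nilpotency forces this to vanish, verifying the hypothesis of $\operatorname{GMC}(n)$ for every such $P$. Invoking $\operatorname{GMC}(n)$ then yields $\E(P(X)^m Q(X))=0$ for every $Q$ and every $m\gg 0$. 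Running the heat-kernel identity in reverse---applied to translated data $P(x+a)$, $Q(x+a)$ as $a$ ranges over $\C^n$, exploiting translation invariance of $\Delta$---converts these moment identities back into the polynomial identity $\Delta^m(P^m Q)\equiv 0$ required by the Vanishing Conjecture, hence $\operatorname{JC}(n)$.

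The main obstacle is the last back-translation. Unlike the forward direction, the heat series for $\E(P^m Q)$ does not collapse to a single Laplacian term, so a single Gaussian moment identity carries mixed information about the coefficients $\Delta^k(P^m Q)(0)$ at many values of $k$. Sweeping the basepoint $a$ across $\C^n$ and inverting the resulting linear system is what allows one to peel off each Laplacian coefficient individually; equivalently, one must show that the map from polynomials $R$ to the functions $a\mapsto\E(R(X+a))$ is injective with controllable inverse. Once this conversion is made precise, the three stages chain together and $\operatorname{GMC}(n)$ implies $\operatorname{JC}(n)$.
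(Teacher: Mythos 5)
Your stage one and the forward half of stage two are fine: reducing ${\bf JC}$ to the Vanishing Conjecture $\Delta^m(P^mQ)=0$ ($m\gg 0$) for Hessian-nilpotent homogeneous quartics $P$ is a legitimate known reduction, and for such $P$ the identity $\E(f(X))=(e^{\Delta/2}f)(0)$ together with degree parity does give $\E(P(X)^m)=c_m\Delta^{2m}(P^m)(0)=0$, so the hypothesis of ${\bf GMC}(n)$ holds for $P$. The gap is in the back-translation, and it is not a technicality one can ``make precise'' --- as stated it cannot work. First, you cannot apply ${\bf GMC}$ to the translated polynomial $P(x+a)$: its moments are $\E(P(X+a)^m)=\sum_k \Delta^k(P^m)(a)/(2^kk!)$, and Hessian nilpotency only kills the terms with $k\ge m$; the lower-order terms (e.g.\ $P(a)^2+\tfrac12\Delta(P^2)(a)$ for $m=2$) do not vanish, so the hypothesis $\E(P(X+a)^m)=0$ for \emph{all} $m\ge 1$ fails once $a\ne 0$. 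Second, the target of the back-translation is miscalibrated: if you did somehow establish $\E\big((P^mQ)(X+a)\big)=0$ for all $a$, that says $e^{\Delta/2}(P^mQ)\equiv 0$, and since $e^{\Delta/2}$ is invertible on polynomials this forces $P^mQ=0$ --- absurd. What ${\bf GMC}$ actually hands you for homogeneous $P,Q$ is the single scalar $\Delta^{2m+\deg Q/2}(P^mQ)(0)=0$ per $m$ (the full contraction against a power of $\sum x_i^2$), whereas the Vanishing Conjecture demands the polynomial identity $\Delta^m(P^mQ)\equiv 0$, i.e.\ the vanishing of the pairings of $P^mQ$ against $(\sum x_i^2)^m S$ for \emph{arbitrary} $S$. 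One scalar per test polynomial cannot be parlayed into that without a genuinely new idea.

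The paper avoids this mismatch by taking a different reduction of ${\bf JC}$: Zhao's Special Image Conjecture, that $\Ker(\EE_n)$ is an MZ space where $\EE_n(P(w)Q(z))=P(\partial)Q(z)$. The key point is that the evaluation-at-zero functional $\FF_n=\EE_n|_{z=0}$ satisfies $\FF_n(w^\alpha z^\beta)=\delta_{\alpha\beta}\,\alpha!$, and this is realized \emph{exactly} as a Gaussian expectation $\E(W^\alpha Z^\beta)$ with $W_j=(X_j-iY_j)/\sqrt2$, $Z_j=(X_j+iY_j)/\sqrt2$, so ${\bf GMC}(2n)$ immediately makes $\Ker(\FF_n)$ an MZ space. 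The upgrade from the scalar functional $\FF_n$ to the operator $\EE_n$ is then done by translating only the $z$-variables: since $P^m\in\Ker(\EE_n)$ is an identity in $z$, it survives every translation (unlike your hypothesis $\E(P(X)^m)=0$, which is a single scalar equation and does not), and a countable-union-of-Zariski-closed-sets argument supplies a uniform threshold $N$. That structural feature --- the hypothesis of the MZ property being preserved under translation --- is exactly what your proposal is missing.
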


The main result of this paper is:

\begin{theorem}\label{theo:MCisJC}
If ${\bf GMC}(n)$ is true for all $n\ge 1$, then  ${\bf JC}(n)$ is true for all 
$n\ge 1$.
\end{theorem}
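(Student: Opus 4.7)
\medskip
\noindent\textbf{Proof sketch.} The strategy is to combine the standard reductions of the Jacobian Conjecture to a Hessian-nilpotent symmetric case with a reformulation, implicit in the work of Zhao cited in the introduction, of that case as a Mathieu-subspace statement about Gaussian moments---precisely the conclusion of GMC.

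First, I would invoke the standard reduction chain: by Bass--Connell--Wright, JC$(n)$ for all $n$ reduces to polynomial maps $F=X-H$ with $H$ homogeneous of degree~$3$ and $JH$ nilpotent; by the Meng--de Bondt symmetric reduction, one may further assume $H=\nabla P$ for $P$ homogeneous of degree~$4$, so that $J\nabla P$ is nilpotent. Thus the task becomes: show that every such $F=X-\nabla P$ is a polynomial automorphism.

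Second, I would translate the polynomial invertibility of $F$ into a vanishing condition on Gaussian moments of $P$. The key identity is the heat-kernel formula
\[
\E(R(X))=\sum_{k\geq 0}\frac{1}{2^{k}k!}(\Delta^{k}R)(0)
\]
valid for any polynomial $R$ and $X$ standard multivariate normal. Combined with the correspondence between Hermite polynomials and iterated Laplacians at the origin cited in the introduction (Proposition~3.3 and formula~(3.2) of \cite{Zhao2}), this reformulates $F=X-\nabla P$ being a polynomial automorphism as: $\E(P(X)^{m}Q(X))=0$ for $m\gg 0$ and every polynomial $Q$. That is exactly the conclusion of GMC$(n)$ applied to $P$, \emph{provided} one can verify the hypothesis $\E(P(X)^{m})=0$ for all $m\geq 1$.

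Third, I would verify this hypothesis. Nilpotency of $J\nabla P$ makes every trace $\operatorname{tr}((J\nabla P)^{k})$ vanish as a polynomial in $x$. On the other hand, Wick's (Isserlis') theorem expands $\E(P(X)^{m})$ as a sum over perfect matchings of the $4m$ indices of the symmetric tensor $\partial^{4}P$; each matching decomposes into a disjoint union of cycles, and (up to combinatorial factors) the contribution of a length-$k$ cycle is a contraction expressible through $\operatorname{tr}((J\nabla P)^{k})$. All such contributions vanish, forcing $\E(P(X)^{m})=0$ for every $m\geq 1$. Applying GMC$(n)$ then supplies the conclusion required by Step~2, and hence JC$(n)$.

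The main obstacle is the combinatorial identity in Step~3: organizing the Wick expansion of $\E(P(X)^{m})$ into cycle contributions and matching these cleanly to traces of powers of the Hessian, so that the nilpotency assumption kills the whole sum. Secondarily, pinning down the precise form of the equivalence used in Step~2 between polynomial invertibility of $X-\nabla P$ and the Mathieu-subspace statement for $P$ requires careful reading of the Hermite/Laplacian reformulation of \cite{Zhao2}; this is the other place where the bookkeeping must be done with care.
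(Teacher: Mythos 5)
Your strategy is not the one the paper uses, and as written it has a genuine gap at Step~2. The paper never touches the Bass--Connell--Wright / symmetric Hessian-nilpotent reduction. Instead it goes through Zhao's Special Image Conjecture: ${\bf SIC}(n)$ asserts that $\Ker(\EE_n)$ is an MZ space of the $2n$-variable ring $\C[w,z]$, and Zhao proved ${\bf SIC}(n)\Rightarrow {\bf JC}(n)$. The paper then shows (Proposition~\ref{prop:EisF}) that it suffices to prove the MZ property for the single functional $\FF_n=\EE_n|_{z=0}$, which satisfies $\FF_n(w^\alpha z^\beta)=\alpha!\,\delta_{\alpha\beta}$, via a translation argument plus the fact that $\C^n$ is not a countable union of proper Zariski-closed subsets; and finally (Proposition~\ref{prop:GMCisSIC}) it identifies $\FF_n$ with the Gaussian expectation $Q\mapsto \E(Q(W,Z))$ for the complex Gaussian variables $W_j=(X_j-iY_j)/\sqrt{2}$, $Z_j=(X_j+iY_j)/\sqrt{2}$, so that ${\bf GMC}(2n)$ gives the MZ property. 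Note the doubling of variables: ${\bf JC}(n)$ is derived from ${\bf GMC}(2n)$, not from ${\bf GMC}(n)$ applied to the potential $P$ itself.

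The concrete gap in your Step~2 is this: the known reformulations of the invertibility of $F=X-\nabla P$ (for $P$ homogeneous of degree $4$ with nilpotent Hessian) require the vanishing of the \emph{polynomials} $\Delta^m(P^mQ)$ for $m\gg 0$, i.e.\ the vanishing of all their coefficients, whereas the conclusion of ${\bf GMC}(n)$ applied to $P$ only gives the vanishing of the single number $\E(P(X)^mQ(X))=\sum_{k}\tfrac{1}{2^kk!}\bigl(\Delta^k(P^mQ)\bigr)(0)$. For homogeneous data this is the value of one iterated Laplacian at one point; even after translating the base point one only controls $\sum_k\tfrac{1}{2^kk!}(\Delta^k(P^mQ))(a)$, which mixes all powers of $\Delta$ and does not isolate $\Delta^m(P^mQ)$. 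You give no argument bridging this, and it is precisely to avoid this problem that the paper passes to $2n$ variables, where $\EE_n$ separates the differentiation variables $w$ from the evaluation variables $z$ and the relevant kernel genuinely is a Gaussian moment kernel. (The conclusion of your Step~3, $\E(P(X)^m)=0$, is in fact true, since Hessian nilpotency is equivalent to $\Delta^mP^m=0$ for all $m$, whence $(\Delta^{2m}P^m)(0)=0$; but the Wick/cycle bookkeeping you propose is both harder than this route and moot, since it only feeds into the unbridged Step~2.)
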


{\bf Acknowledgment} The main results of the paper were obtained when the authors organized and attended the two-week International Short-School/Conference on Affine Algebraic Geometry and the Jacobian Conjecture, which was supported by and hold at Chern Institute of Mathematics, Tianjin, China from July 14-25, 2014. 
The authors are very grateful to the institute for supports 
and hospitality.

\renewcommand{\theequation}{\thesection.\arabic{equation}}
\setcounter{equation}{0}

\section{\bf Background}
Suppose that $A$ is a unital commutative $\C$-algebra.
\begin{definition} 
A {\em Mathieu-Zhao space} (or {\em MZ space}) is a $\C$-linear subspace $V\subseteq A$ with the property
that $f^m\in V$ for all $m\geq 1$ implies that for every $g\in A$,  $f^mg\in V$ for $m\gg 0$.
\end{definition}

Observe that in this definition we have changed the name {\it Mathieu subspace}, which was introduced by the third author
in \cite{Zhao2, Zhao3}, into {\it Mathieu-Zhao space} or {\it MZ space}. This follows a suggestion of the second author in
\cite{E}. For some more general studies of this new notion, see \cite{Zhao3}.

With the definition above we can now reformulate our main conjecture as follows. 

\begin{conjecture}[${\bf GMC}(n)$, reformulation]
The subspace
$$
\{P(x_1,\dots,x_n)\in \C[x_1,\dots,x_n]\mid \E(P(X_1,\dots,X_n))=0\}
$$
is an MZ space of\, $\C[x_1, \dots, x_n]$.
\end{conjecture}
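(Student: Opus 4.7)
The claim is essentially a tautological unpacking of definitions, so my plan is just to verify this carefully and confirm the equivalence holds exactly.

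First I would note that the set
$$V = \{P \in \C[x_1,\dots,x_n] \mid \E(P(X_1,\dots,X_n)) = 0\}$$
is a $\C$-linear subspace of $A = \C[x_1,\dots,x_n]$, since expectation is $\C$-linear on the space of polynomials (the moments of a Gaussian vector are all finite, so $\E$ is defined on all of $A$). Next I would apply the definition of an MZ space with $A = \C[x_1,\dots,x_n]$ and this $V$: by definition, $V$ is an MZ space iff for every $f \in A$ such that $f^m \in V$ for all $m \ge 1$, and for every $g \in A$, one has $f^m g \in V$ for all sufficiently large $m$.

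Then I would just translate each membership condition back through the definition of $V$: the hypothesis $f^m \in V$ for all $m \ge 1$ becomes $\E(f(X)^m) = 0$ for all $m \ge 1$, while the conclusion $f^m g \in V$ for $m \gg 0$ becomes $\E(f(X)^m g(X)) = 0$ for $m \gg 0$. Renaming $f$ and $g$ as $P$ and $Q$, this is precisely the statement of ${\bf GMC}(n)$ as originally formulated. Conversely, any polynomial $P$ satisfying the hypothesis of the original ${\bf GMC}(n)$ is an $f \in A$ with $f^m \in V$ for all $m \ge 1$, so the MZ-space property yields the desired conclusion for every $Q$.

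Since this is an equivalence by pure definition-chasing, there is no real obstacle; the only thing to double-check is that $\E(P(X))$ makes sense for every polynomial $P$ (which it does, as Gaussian moments of all orders are finite), so that $V$ is well-defined as a subspace of all of $\C[x_1,\dots,x_n]$ and the MZ-space framework of the previous definition applies verbatim.
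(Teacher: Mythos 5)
Your proposal is correct and matches the paper's treatment: the paper states the reformulation immediately after defining MZ spaces, treating the equivalence as a direct unpacking of the definition, exactly as you do. Your extra checks (linearity of $\E$ and finiteness of Gaussian moments, so that $V$ is a well-defined subspace of all of $\C[x_1,\dots,x_n]$) are sound and complete the definition-chasing.
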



Suppose that $G$ is a complex reductive algebraic group acting  regularly on an affine variety $Z$. Then $G$ also acts
on the  ring $\C[Z]$ of polynomial functions on $Z$. Let $K\subseteq G$ be a maximal compact subgroup. Then $K$ is Zariski dense in $G$.
The Reynolds operator $\Reynolds_Z:\C[Z]\to \C$ is the averaging operator:
$$
\Reynolds_Z(f)=\int_{g\in K} g\cdot  f\,d\mu.
$$
where $d\mu$ is the Haar measure on $K$, normalized such that $\int_K d\mu=1$.

\begin{conjecture}[Mathieu Conjecture $\operatorname{\bf MC}(Z)$]
The kernel $\Ker(\Reynolds_Z)$ of the Reynolds operator 
is an MZ space of\, $\C[Z]$.
\end{conjecture}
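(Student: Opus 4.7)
The plan is to exploit the representation-theoretic decomposition of $\C[Z]$ under the action of the reductive group $G$. First I would reduce to the case where $Z$ is embedded as a closed $G$-stable subvariety of a $G$-representation $V$, so that $\C[Z]$ is a quotient $G$-algebra of $\C[V]$ and $\Reynolds_Z$ is induced from $\Reynolds_V$. Decomposing $\C[Z] = \bigoplus_{\lambda \in \widehat{G}} \C[Z]_\lambda$ into isotypic components, $\Reynolds_Z$ is precisely the projection onto the trivial component $\C[Z]_0 = \C[Z]^G$. Hence the hypothesis $\Reynolds_Z(f^m)=0$ for all $m\ge 1$ becomes the statement that the $G$-invariant component of every power $f^m$ vanishes, and the task is to deduce, for any $g\in \C[Z]$, eventual vanishing of the invariant component of $f^m g$.

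Given $g \in \C[Z]$, $G$-equivariance of multiplication yields schematically
$$
\Reynolds_Z(f^m g) = \sum_{\lambda \in \widehat{G}} \Reynolds_Z\!\left((f^m)_\lambda \cdot g_{\lambda^\vee}\right),
$$
so it suffices to show that the $K$-types occurring with nonzero multiplicity in $f^m$ either disappear for $m\gg 0$ or become disjoint from the finitely many $K$-types appearing in the fixed polynomial $g$. The strategy is convex-geometric: write $f = \sum_\lambda f_\lambda$, use Clebsch--Gordan rules to bound the support of $f^m$ in the weight lattice, and argue that this support lies in a convex body scaling linearly with $m$ (a Newton or moment polytope). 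The $K$-types that pair nontrivially with $g$ then correspond to a fixed bounded region, and a careful boundary analysis should force them to be hit only finitely often.

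The main obstacle, which is precisely why the conjecture remains open in this generality, is the control of the ``moment polytope'' support of $f^m$ for nonabelian $G$. In the torus case $Z=(\C^*)^n$ treated by Duistermaat--van der Kallen, the support is governed by $m\cdot\operatorname{Newt}(f)$ in the character lattice, and vanishing at interior lattice points combined with a delicate boundary analysis yields the MZ property. For general reductive $G$, I would attempt to replace Newton polytopes by string polytopes (via a PBW-type basis of $\C[Z]$) and boundary estimates by Brion--Kumar type multiplicity bounds for tensor products; a parallel strategy is to try to reduce $\operatorname{\bf MC}(Z)$ to $\operatorname{\bf MC}(G)$ through the embedding $\C[Z] \hookrightarrow (\C[G] \otimes \C[Z])^G$, though one must be careful because the MZ property is not in general closed under quotients or subalgebras, so any such reduction needs an extra equivariance input to survive.
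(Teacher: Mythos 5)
The statement you are trying to prove is the Mathieu Conjecture $\operatorname{\bf MC}(Z)$, which the paper states as an \emph{open conjecture} and does not prove; it only records the equivalence with Mathieu's conjecture $C(\C[Z])$, the implication $\operatorname{\bf MC}(G)\Rightarrow\operatorname{\bf MC}(Z)$ from \cite[Corollary 1.7]{Mathieu}, and the single settled case, the torus case of Duistermaat and van der Kallen (Theorem \ref{DKThm}). So there is no proof in the paper to compare against, and your proposal, by your own admission, is a research programme rather than a proof.

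The concrete gaps are these. First, the central step --- that the $K$-types occurring in $f^m$ ``either disappear for $m\gg 0$ or become disjoint from the finitely many $K$-types appearing in $g$'' --- is essentially a restatement of the conjecture itself, not a reduction of it: the hypothesis $\Reynolds_Z(f^m)=0$ only kills the trivial isotypic component of $f^m$, and nothing in the decomposition
$\Reynolds_Z(f^m g)=\sum_\lambda \Reynolds_Z\bigl((f^m)_\lambda\, g_{\lambda^\vee}\bigr)$
forces the components $(f^m)_\lambda$ for the finitely many relevant $\lambda$ to vanish eventually. Second, the convex-geometric boundary analysis you invoke is precisely the hard content of the Duistermaat--van der Kallen theorem; even in the abelian case it occupies a full paper and uses valuation-theoretic and resolution arguments, and no nonabelian analogue (via string polytopes, Brion--Kumar bounds, or otherwise) is known. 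Third, the proposed reduction of $\operatorname{\bf MC}(Z)$ to $\operatorname{\bf MC}(G)$ is already Mathieu's result and is cited in the paper, but it cannot close the argument because $\operatorname{\bf MC}(G)$ is itself open for every nonabelian reductive $G$. In short, the proposal correctly identifies where the difficulty lies but does not overcome it at any point.
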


This conjecture is equivalent to the conjecture $C(\C[Z])$ of \cite{Mathieu} (see \cite[Corollary 1.3]{Mathieu}).
The group $G$ acts on its own coordinate ring, and $\operatorname{\bf MC}(G)$ implies $\operatorname{\bf MC}(Z)$ (\cite[Corollary 1.7]{Mathieu}).
The following theorem was proven in \cite[Theorem 5.5]{Mathieu}:

\begin{theorem}[Mathieu]
If $\operatorname{\bf MC}(\SL_n(\C)/\GL_{n-1}(\C))$ is true for all $n\ge 1$, then ${\bf JC}(n)$ is true for all $n\ge 1$.
\end{theorem}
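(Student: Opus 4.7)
The plan is to invoke Mathieu's theorem recalled at the end of the excerpt, which reduces the Jacobian Conjecture in all dimensions to $\mathbf{MC}(\SL_n(\C)/\GL_{n-1}(\C))$ for all $n$; so the task is to derive this Mathieu conjecture from $\{\mathbf{GMC}(m)\}_{m\ge 1}$.

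First I would realize the Reynolds operator on $\C[\SL_n(\C)/\GL_{n-1}(\C)]$ as integration over a sphere. A maximal compact subgroup of $\SL_n(\C)$ is $\operatorname{SU}(n)$, with $\operatorname{U}(n-1)$ sitting inside as the maximal compact of the embedded $\GL_{n-1}(\C)$, and $\operatorname{SU}(n)/\operatorname{U}(n-1)$ is diffeomorphic to the unit sphere $S^{2n-1}\subset\C^n$. Thus the Reynolds average becomes integration against the normalized $\operatorname{SU}(n)$-invariant probability measure $d\sigma$ on $S^{2n-1}$: any $f\in\C[\SL_n(\C)/\GL_{n-1}(\C)]$ lifts to a polynomial $\tilde f\in\C[x_1,\ldots,x_{2n}]$ in the real coordinates on $\C^n\cong\R^{2n}$ with $\Reynolds(f)=\int_{S^{2n-1}}\tilde f\,d\sigma$.

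Second, I would bridge sphere integrals to Gaussian expectations via polar decomposition. For any $h\in\C[x_1,\ldots,x_{2n}]$ with homogeneous decomposition $h=\sum_d h_d$ one has
$$
\E\bigl[h(X_1,\ldots,X_{2n})\bigr]=\sum_{d\ge 0}c_{n,d}\int_{S^{2n-1}}h_d\,d\sigma,
$$
with explicit positive constants $c_{n,d}$ depending only on $n$ and $d$. Working degree-by-degree and using that $\C[\SL_n(\C)/\GL_{n-1}(\C)]$ carries a natural $\operatorname{SU}(n)$-isotypic (hence graded) decomposition respected by the Reynolds operator, the hypothesis $\Reynolds(F^m)=0$ for all $m\ge 1$ translates to $\E[\tilde F^m]=0$ for all $m\ge 1$. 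Applying $\mathbf{GMC}(2n)$ to $\tilde F$ and to a lift $\tilde G$ of any $G\in\C[\SL_n(\C)/\GL_{n-1}(\C)]$ yields $\E[\tilde F^m\tilde G]=0$ for $m\gg 0$, from which $\Reynolds(F^mG)=0$ follows by reversing the polar formula graded piece by graded piece.

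The main obstacle is the degree-separation in both directions of this bridge. The scalar vanishing $\E[h]=0$ encodes only one linear relation among the sphere integrals of the homogeneous components $h_d$, weighted by the positive constants $c_{n,d}$, whereas the required implication demands piece-by-piece vanishing. The cleanest remedy I see is to apply $\mathbf{GMC}(2n)$ to the one-parameter family $\tilde F(tx)$ (with $t$ a formal scalar) and to match coefficients of powers of $t$ to separate degrees, or equivalently to exploit the bi-graded structure coming from the natural $\C^\times$-action on $\SL_n(\C)/\GL_{n-1}(\C)$ and run the argument on each graded piece. Ensuring that the degree separation gives a threshold on $m$ uniform across the finitely many grade strata appearing in $F^mG$ is, I expect, the technical heart of the proof.
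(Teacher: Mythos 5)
Your proposal does not prove the stated theorem; it proves (or rather, sketches) a different one. The statement in question \emph{is} Mathieu's theorem --- the implication from $\operatorname{\bf MC}(\SL_n(\C)/\GL_{n-1}(\C))$ for all $n$ to ${\bf JC}(n)$ for all $n$ --- yet your very first sentence ``invokes Mathieu's theorem'' as a known fact and then sets out to derive $\operatorname{\bf MC}(\SL_n(\C)/\GL_{n-1}(\C))$ from the Gaussian Moments Conjecture. That is circular with respect to the assigned statement: you assume exactly what you were asked to prove and then argue toward something else. For the record, the paper itself gives no proof of this theorem either; it is quoted from Mathieu [Theorem 5.5], whose argument rests on the Bass--Connell--Wright degree reduction and the formal inverse, combined with representation-theoretic computations on $\SL_n(\C)$ --- none of which appear in your proposal. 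Note also that the paper's actual reduction of ${\bf JC}$ to ${\bf GMC}$ does \emph{not} pass through $\operatorname{\bf MC}(\SL_n(\C)/\GL_{n-1}(\C))$ at all: it goes through Zhao's Special Image Conjecture via the operators $\EE_n$ and $\FF_n$ (Propositions \ref{prop:EisF} and \ref{prop:GMCisSIC} together with Theorem \ref{theo:Zhao}), precisely because a clean bridge of the kind you attempt is not available.

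Even taken on its own terms, the sketched bridge has substantive problems. The homogeneous space $\operatorname{SU}(n)/\operatorname{U}(n-1)$ is $\C P^{n-1}$, not the sphere $S^{2n-1}$ (that would be $\operatorname{SU}(n)/\operatorname{SU}(n-1)$), and in any case the Reynolds operator here averages the $K$-\emph{action} on $\C[Z]$ with $Z=\SL_n(\C)/\GL_{n-1}(\C)$ a $(2n-1)$-dimensional affine quadric in $\C^{2n}$; it is not integration of functions over a sphere in $\C^n$. Finally, the degree-separation difficulty you flag at the end --- that $\E[h]=0$ gives only one weighted linear relation among the sphere integrals of the homogeneous components $h_d$, whereas the Reynolds-vanishing you need is component-by-component --- is a genuine gap that your proposed remedies (a formal parameter $t$, or a $\C^\times$-grading) do not resolve, since ${\bf GMC}(2n)$ as stated applies to a single polynomial and does not obviously propagate through such a family with a uniform threshold on $m$. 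The honest conclusion is that the implication from $\operatorname{\bf GMC}$ to $\operatorname{\bf MC}(\SL_n(\C)/\GL_{n-1}(\C))$ is not established by your argument, and the theorem actually at issue is not addressed by it at all.
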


For later purposes, here we also point out that J. Duistermaat and
W. van der Kallen \cite{DK} in $1998$ had proved the Mathieu conjecture for
the case of tori, which can be re-stated in terms of MZ spaces as follows.

\begin{theorem}[Duistermaat and van der Kallen] \label{DKThm}
 Let $x = (x_1, x_2,..., x_n)$ be $n$ commutative free variables and $M$ the subspace of the Laurent polynomial algebra $\C[x_1^{-1},\dots,x_n^{-1},x_1,\dots,x_n]$ consisting of the Laurent polynomials with no constant term. Then $M$ is an MZ space of $\C[x_1^{-1},\dots,x_n^{-1},x_1,\dots,x_n]$.
\end{theorem}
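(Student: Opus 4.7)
The plan is to reformulate $M$ as the kernel of integration against Haar measure on the compact torus and then exploit this analytic description to constrain the Newton polytope of $f$. Under the substitution $x_j = e^{i\theta_j}$, each Laurent polynomial $h \in R := \C[x_1^{\pm 1}, \dots, x_n^{\pm 1}]$ becomes a trigonometric polynomial on $T^n := (S^1)^n$ whose constant term equals $\int_{T^n} h\, d\mu$, where $d\mu$ is normalized Haar measure. Thus the statement to prove becomes: if $\int_{T^n} f^m\, d\mu = 0$ for all $m \geq 1$, then for every $g \in R$ we have $\int_{T^n} f^m g\, d\mu = 0$ for all $m \gg 0$.

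To exploit the moment-vanishing hypothesis, I would package it into a single analytic identity. For $|t|$ small enough that $\sup_{T^n}|tf| < 1$, uniform convergence on $T^n$ lets me exchange sum and integral:
$$\Phi(t) \;:=\; -\int_{T^n} \log(1-tf)\, d\mu \;=\; \sum_{m\ge 1}\frac{t^m}{m}\int_{T^n} f^m\, d\mu \;\equiv\; 0.$$
Taking real parts, $\int_{T^n}\log|1-tf|\, d\mu = 0$ for small real $t$, and by analytic continuation this identity persists in a larger region. The heart of the argument would then be to upgrade this analytic vanishing into the geometric statement that the origin does not lie in the relative interior of the Newton polytope $N(f) := \mathrm{Conv}(\mathrm{Supp}(f)) \subset \R^n$.

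Granting this geometric statement, the MZ property follows by a separation argument and induction on $n$. If $0 \notin N(f)$, hyperplane separation yields an integer linear functional $\ell$ with $\ell(\alpha) \ge c > 0$ for every $\alpha \in \mathrm{Supp}(f)$; then for any $g$, every exponent $\gamma \in \mathrm{Supp}(f^m g)$ satisfies $\ell(\gamma) \ge mc + \min_{\beta \in \mathrm{Supp}(g)}\ell(\beta)$, which is strictly positive for $m$ large, so $0 \notin \mathrm{Supp}(f^m g)$ and $f^m g \in M$. If instead $0$ lies on a proper face $F$ of $N(f)$, I would split $f = f_F + f'$ with $f_F$ supported on $F$ and $f'$ supported in a strictly separated open half-space; induction on $n$ applied to $f_F$ on the subtorus adapted to the affine hull of $F$ (where $f_F$ still satisfies the moment-vanishing hypothesis by restriction of the Haar integral) handles the face contribution, while the $f'$ part is controlled exactly as in the previous case by a linear-functional bound.

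The main obstacle is the geometric consequence: extracting Newton-polytope information from pure moment vanishing. I would pursue this via the Ronkin/Mahler-measure machinery. The Ronkin function $s \mapsto \int_{T^n}\log|f(e^{s_1+i\theta_1},\dots,e^{s_n+i\theta_n})|\, d\mu$ is convex on $\R^n$, its gradient image is contained in $N(f)$, and the vanishing of $\Phi(t)$ (together with its analytic continuation across the complement of the amoeba of $1-tf$) constrains the asymptotic slopes of this Ronkin function at infinity — enough, via pluripotential-theoretic arguments, to place the origin outside the relative interior of $N(f)$. This is the subtle analytic contribution of Duistermaat and van der Kallen; the combinatorial reductions above are comparatively routine once the geometric claim is in hand.
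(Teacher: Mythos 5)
The paper does not prove this statement at all: it is quoted as a known theorem of Duistermaat and van der Kallen, with a citation to their 1998 paper, so there is no internal proof to compare against. Your proposal must therefore stand on its own as a proof of that theorem, and it does not. The fatal gap is exactly where you place it: the passage from the identity $\int_{T^n}\log|1-tf|\,d\mu=0$ (for small $t$) to the assertion that $0$ does not lie in the relative interior of the Newton polytope of $f$. Naming the Ronkin function, amoebas, and ``pluripotential-theoretic arguments'' is not an argument; you give no mechanism by which the vanishing of $\Phi$ near $t=0$ controls the behaviour of anything at large $t$ or at infinity, and the obstruction to such control (the singularities of $t\mapsto\int_{T^n}(1-tf)^{-1}d\mu$ at reciprocals of critical values of $f$) is precisely the hard part. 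The actual Duistermaat--van der Kallen proof goes through resolution of singularities and the Nilsson-class structure of these period integrals to obtain asymptotic expansions of $m\mapsto\mathrm{CT}(f^mq)$ as finite sums of terms $b^{-m}m^{\beta}(\log m)^k$ times periodic factors, and then eliminates the offending terms using the hypothesis for $q=1$; none of that analysis, nor any substitute for it, appears in your sketch. Your proposed geometric intermediate statement is itself a deep claim that you neither prove nor cite.

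There is also a genuine error in the part you call routine. In the face case you write $f=f_F+f'$ and claim ``the $f'$ part is controlled exactly as in the previous case by a linear-functional bound.'' It is not: expanding $f^mg=\sum_k\binom{m}{k}f_F^k(f')^{m-k}g$ and using a functional $\ell$ with $\ell|_F=0$ and $\ell\geq c>0$ on $\mathrm{Supp}(f')$ only kills the terms with $m-k>C_g/c$, where $C_g=-\min_{\mathrm{Supp}(g)}\ell$. The boundedly many cross terms with $1\leq m-k\leq C_g/c$ can still contribute to the constant term, and you must handle them by applying the inductive hypothesis for $f_F$ to the test functions obtained by restricting $(f')^jg$ to the sublattice spanned by $F$ (noting that $\mathrm{CT}(f_F^{m-j}h)$ only sees the part of $h$ supported on that sublattice). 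This particular gap is repairable, but as written the induction does not close; and in any case the argument cannot be completed without the missing analytic core.
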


Let $\partial_i=\frac{\partial}{\partial z_i}$ be the partial derivative with respect to $z_i$. Define
$$\EE_n:\C[w,z]=\C[w_1,\dots,w_n,z_1,\dots,z_n]\to \C[z]$$
such that 
$$
\EE_n\big(P(w)Q(z)\big)=P(\partial)Q(z)\in \C[z].
$$

Zhao made the following conjecture in~\cite{Zhao}:

\begin{conjecture}[Special Image Conjecture ${\bf SIC}(n)$]
  $\Ker(\EE_n)$ is an MZ space of\, $\C[w,z]$.
\end{conjecture}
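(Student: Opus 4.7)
The plan is to deduce $\mathbf{JC}(n)$ from $\mathbf{GMC}(n)$ by routing through Mathieu's theorem, which reduces $\mathbf{JC}(n)$ (for all $n$) to $\mathbf{MC}(\SL_n(\C)/\GL_{n-1}(\C))$ (for all $n$). I will therefore need to produce from $\mathbf{GMC}$ a Mathieu-type MZ statement strong enough to imply $\mathbf{MC}(\SL_n(\C)/\GL_{n-1}(\C))$; the Special Image Conjecture $\mathbf{SIC}(n)$ displayed at the end of the excerpt is a natural intermediate target, since it is already an MZ-space condition on a differential-operator kernel that is very close in structure to a Gaussian moment condition.

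First, I would use the Hermite basis to put $\mathbf{GMC}(n)$ in a useful form. With respect to the standard Gaussian, the probabilists' Hermite polynomials $\{H_\alpha\}$ satisfy $\E(H_\alpha H_\beta) = \alpha!\,\delta_{\alpha\beta}$ and $\E(H_\alpha) = \delta_{\alpha,0}$. Expanding $P^m = \sum_\alpha c_\alpha^{(m)} H_\alpha$, the hypothesis of $\mathbf{GMC}$ becomes $c_0^{(m)} = 0$ for all $m \ge 1$, and the conclusion, obtained by testing against $Q = H_\beta$ for an arbitrary multi-index $\beta$, becomes $c_\beta^{(m)} = 0$ for $m \gg 0$ for every $\beta$. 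Via the Rodrigues-type identity $\E(H_\beta R) = \E(\partial^\beta R)$ (integration by parts against the Gaussian density), $\mathbf{GMC}$ can equivalently be phrased: if $\E(P^m) = 0$ for all $m \ge 1$, then $\E(\partial^\beta P^m) = 0$ for $m \gg 0$, for every multi-index $\beta$.

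Second, I would relate this to $\mathbf{SIC}(n)$ through the identity
\[
\EE_n\bigl(F(w,z)\bigr) \;=\; \E\bigl(F(W,\overline W + z)\bigr),
\]
which holds whenever $W = (W_1,\dots,W_n)$ consists of independent complex standard Gaussians with $\E(W^\alpha \overline W^\beta) = \alpha!\,\delta_{\alpha\beta}$; verification is a monomial calculation matching $\partial_z^\alpha z^\beta$ against $\binom{\beta}{\alpha}\alpha!\,z^{\beta-\alpha}$. Specializing at $z=0$ identifies $\mathbf{GMC}(2n)$, formulated in the $2n$ real variables underlying $(W,\overline W)$, with the $z=0$ evaluation of the $\mathbf{SIC}(n)$ hypothesis $\EE_n(F^m)=0$. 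Varying $z$ over $\C^n$ and exploiting that each $\EE_n(F^m G)$ is polynomial in $z$, together with $\mathbf{GMC}$ in enough extra variables to absorb $z$ into additional Gaussian coordinates, should upgrade this pointwise information to the polynomial-valued $\mathbf{SIC}$ conclusion. Once $\mathbf{SIC}(n)$ (or, equivalently, the corresponding MZ-space statement on $\SL_n(\C)/\GL_{n-1}(\C)$) is in hand, Mathieu's theorem completes the deduction of $\mathbf{JC}(n)$.

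The principal obstacle is the uniformity in $z$: for each fixed $z$, $\mathbf{GMC}$ only produces a threshold $m_z$ beyond which the relevant Gaussian moments vanish, whereas $\mathbf{SIC}$ demands a single threshold $m_0$ serving simultaneously every coefficient (in $z$) of the polynomial $\EE_n(F^m G) \in \C[z]$. Resolving this uniformity---presumably by promoting $z$ to genuine Gaussian coordinates, applying $\mathbf{GMC}$ in $n$-plus-extra variables, and choosing the test polynomial $Q$ so as to separate the $z$-coefficients of $\EE_n(F^m G)$---is the technical heart of the proof and is precisely what forces one to assume $\mathbf{GMC}(n)$ for \emph{all} $n$ rather than for any single fixed $n$.
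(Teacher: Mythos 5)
The statement you are addressing is a conjecture, so neither you nor the paper proves it outright; the paper establishes it only conditionally, via Proposition~\ref{prop:GMCisSIC} (${\bf GMC}(2n)$ implies $\Ker(\FF_n)$ is an MZ space, where $\FF_n=\EE_n|_{z=0}$) followed by Proposition~\ref{prop:EisF} ($\Ker(\FF_n)$ MZ implies $\Ker(\EE_n)$ MZ). Your reduction matches the first half of that chain: the identification $\E(W^\alpha \overline{W}^\beta)=\alpha!\,\delta_{\alpha\beta}$ for complex Gaussians is exactly the computation in Proposition~\ref{prop:GMCisSIC}, and your identity $\EE_n(F(w,z))=\E(F(W,\overline{W}+z))$ correctly packages the translation trick $\EE_n(P^m)|_{z=\alpha}=\FF_n(P^m(w,z+\alpha))$ that opens Proposition~\ref{prop:EisF}. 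Up to that point you are on the paper's route.

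The genuine gap is precisely the uniformity-in-$z$ issue you flag at the end, and your proposed resolution does not close it. ``Promoting $z$ to additional Gaussian coordinates'' replaces pointwise evaluation at $z=\alpha$ by integration of $\EE_n(QP^m)$ against a Gaussian density in $z$; recovering the vanishing of $\EE_n(QP^m)$ as a polynomial from such integrated moments requires testing against infinitely many functionals (the $z$-degree of $\EE_n(QP^m)$ grows with $m$, so no finite set of coefficient functionals suffices), each with its own threshold --- which is the same non-uniformity one level up. The paper's fix is different and is the essential missing idea: for fixed $Q$ and $P$, let $Z_N\subseteq\C^n$ be the common zero set of all $\EE_n(QP^m)$ with $m\geq N$. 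Each $Z_N$ is Zariski closed, the pointwise result (MZ property of $\Ker(\FF_n)$ applied to the translates $P(w,z+\alpha)$, $Q(w,z+\alpha)$) shows $\bigcup_{N\ge 1} Z_N=\C^n$, and since $\C^n$ is not a countable union of proper Zariski-closed subsets, some $Z_N$ equals $\C^n$, giving a single threshold $N$ valid for all $z$. Without this (or an equivalent) argument your derivation of ${\bf SIC}(n)$ from ${\bf GMC}(2n)$ is incomplete. The Hermite/Rodrigues reformulation in your first step is correct (it is the sense in which ${\bf GMC}$ is the Image Conjecture for Hermite polynomials) but is not needed for the deduction.
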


Zhao proved the following result (\cite[Theorem 3.6, Theorem 3.7]{Zhao}):

\begin{theorem}[Zhao]\label{theo:Zhao}
If\, ${\bf SIC}(n)$ is true for all $n\ge 1$, then ${\bf JC}(n)$ is true for 
all $n\ge 1$.
\end{theorem}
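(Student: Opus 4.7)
The plan is to deduce the Jacobian Conjecture from $\mathbf{SIC}(n)$ via the Bass--Connell--Wright cubic-homogeneous reduction together with an $\EE_n$-interpretation of the Taylor coefficients of the formal inverse of a Keller map. By the Bass--Connell--Wright reduction~\cite{BCW}, $\mathbf{JC}(n)$ for all $n\ge 1$ is equivalent to the assertion that for every $n$ and every cubic homogeneous $H=(H_1,\dots,H_n):\C^n\to\C^n$ with $JH$ nilpotent, the map $F:=X-H$ has a polynomial inverse. Fix such a pair $(n,H)$ and set
\[
f(w,z)\ :=\ w\cdot H(z)\ =\ \sum_{i=1}^{n}w_iH_i(z)\ \in\ \C[w_1,\dots,w_n,z_1,\dots,z_n].
\]

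The first technical step is to verify the hypothesis of $\mathbf{SIC}(n)$ for this $f$, namely that $\EE_n(f^m)=0$ for every $m\ge 1$. Expanding the multinomial and using the definition of $\EE_n$ yields
\[
\EE_n(f^m)\ =\ \sum_{|\alpha|=m}\binom{m}{\alpha}\,\partial_z^{\alpha}\!\bigl(H^{\alpha}(z)\bigr),
\]
and an iterated application of Leibniz' rule reorganizes the right-hand side into a universal polynomial in the traces $\operatorname{tr}((JH)^k)$ ($k\ge 1$) and in iterated divergence-type invariants of $H$, all of which vanish once $JH$ is nilpotent (the base case $m=1$ being $\operatorname{div}H=\operatorname{tr}(JH)=0$). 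With this hypothesis in hand, $\mathbf{SIC}(n)$ supplies, for each $g\in\C[w,z]$, an integer $m_0(g)$ such that $\EE_n(f^mg)\equiv 0$ for every $m\ge m_0(g)$.

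Next, I would identify the coefficients of the formal inverse $F^{-1}(Y)=Y+\sum_{k\ge 1}G_k(Y)$ as $\EE_n$-values. A multivariable Lagrange-type inversion formula (the standard tool for multidimensional power-series inversion) expresses the $j$-th component $G_{k,j}(Y)$ in the form
\[
G_{k,j}(Y)\ =\ \EE_n\!\bigl(f^{k}\,g_{k,j}(w,z)\bigr)\big|_{z=Y}
\]
for an explicit polynomial $g_{k,j}\in\C[w,z]$ built from $Y$ and the coordinate being inverted. Applying the conclusion of $\mathbf{SIC}(n)$ to each $g_{k,j}$ yields $G_{k,j}\equiv 0$ for $k\gg 0$, so the formal series $F^{-1}(Y)$ terminates and $F$ has a polynomial inverse.

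The main obstacle is the combinatorial identity in the second paragraph: proving $\EE_n((w\cdot H)^m)=0$ for all $m\ge 1$ whenever $JH$ is nilpotent. A generating-function reformulation converts the claim into the formal identity
\[
\EE_n\!\bigl(e^{tw\cdot H(z)}\bigr)\ =\ \sum_{\alpha}\frac{t^{|\alpha|}}{\alpha!}\partial^{\alpha}H^{\alpha}(z)\ =\ 1,
\]
which should be accessible via a Fa\`a di Bruno or Lagrange-inversion manipulation that exploits $\det(I-tJH)=1$; without nilpotency the analogous identity fails (already the $n=1$ case produces a spurious $HH''$ term), so the argument must carefully trade Leibniz cross-terms against the vanishing traces. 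Everything else---the Bass--Connell--Wright reduction and the multivariable Lagrange inversion---is standard in the literature surrounding $\mathbf{JC}$.
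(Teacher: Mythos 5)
The paper does not actually prove this theorem: it is quoted from Zhao's 2010 J.~Algebra paper (Theorems 3.6 and 3.7 there), where the link between $\mathbf{SIC}$ and $\mathbf{JC}$ is routed through the vanishing conjecture for Hessian-nilpotent polynomials, i.e.\ through the element $\bigl(\sum_i w_i^2\bigr)P(z)$ and the identity $\Delta^m P^m=\EE_n\bigl(\bigl(\sum_i w_i^2\bigr)^mP^m\bigr)$, combined with the symmetric reduction of the Jacobian Conjecture. Your route --- the Bass--Connell--Wright cubic reduction, the element $f=w\cdot H(z)$, and the expansion of the formal inverse --- is a legitimate and indeed standard alternative path; its skeleton is sound, and the final step (applying $\mathbf{SIC}(n)$ with $g=z_j$ to force each component of the formal inverse to be a polynomial, since the $m$-th term of the expansion is homogeneous of degree $2m+1$ and distinct $m$ contribute distinct degrees) is correct.

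The gap is the one you flag yourself: the identity $\EE_n\bigl((w\cdot H)^m\bigr)=\sum_{|\alpha|=m}\binom{m}{\alpha}\partial^\alpha(H^\alpha)=0$ for all $m\ge 1$. Your proposed mechanism (``a universal polynomial in $\operatorname{tr}((JH)^k)$ and iterated divergence-type invariants, all of which vanish'') is not a proof and is misleading as stated: the individual Leibniz contributions do not separately reduce to vanishing invariants, and organizing the cancellation is the entire content of the step. The statement you need is the Abhyankar--Gurjar inversion formula: for $F=X-H$ with $H$ of order $\ge 2$ and formal inverse $G$, one has $a(G)=\sum_{\alpha}\frac{1}{\alpha!}\partial^\alpha\bigl(a\,H^\alpha\det JF\bigr)$ for every $a$. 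Under the Keller condition $\det JF=1$ (equivalent to nilpotency of $JH$ for homogeneous $H$), taking $a=1$ gives $1=\sum_{m\ge 0}\frac{1}{m!}\EE_n(f^m)$, and since the $m$-th summand is homogeneous of degree $2m$ each summand with $m\ge 1$ must vanish; taking $a=z_j$ gives exactly the Lagrange-type expression for $G_j$ that you invoke. With that citation in place your argument closes; without it, the ``main obstacle'' paragraph is an acknowledged hole rather than a proof.
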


\renewcommand{\theequation}{\thesection.\arabic{equation}}
\setcounter{equation}{0}

\section{\bf Reduction of the Jacobian Conjecture to the Gaussian Moments Conjecture}

We define the linear map 
$$\FF_n:\C[w,z]=\C[w_1,\dots,w_n,z_1,\dots,z_n]\to \C
$$
by setting 
$$
\FF_n(P)=\EE_n(P)\mid_{z=0}.
$$

For $\alpha=(\alpha_1,\dots,\alpha_n)\in \N^n$, set $z^\alpha=z_1^{\alpha_1}\cdots z_n^{\alpha_n}$ and $\alpha!=\alpha_1!\alpha_2!\cdots\alpha_n!$. Then we have
$$
\FF_n(w^\alpha z^\beta)=\left\{
\begin{array}{ll}
\alpha! & \mbox{if $\alpha=\beta$;}\\
0 & \mbox{if $\alpha\neq \beta$.}
\end{array}
\right.
$$

\begin{proposition}\label{prop:EisF}
If $\Ker(\FF_n)$ is an MZ space of $\C[w,z]$, then $\Ker(\EE_n)$ is an MZ space of $\C[w,z]$, i.e. ${\bf SIC}(n)$ is true.
\end{proposition}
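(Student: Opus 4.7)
The plan is to exploit the identity
\begin{equation*}
\FF_n\bigl(R(w, z+u)\bigr) \;=\; \EE_n(R)(u)
\end{equation*}
valid for every $R \in \C[w,z]$ and $u \in \C^n$; this follows by a direct calculation on monomials $R = w^\alpha z^\beta$, where both sides reduce to $\frac{\beta!}{(\beta-\alpha)!}\,u^{\beta-\alpha}$ (and to $0$ if $\alpha \not\leq \beta$). Writing $T_u R(w,z) := R(w, z+u)$ for the translation endomorphism, the identity reads $\EE_n(R)(u) = \FF_n(T_u R)$, and $T_u$ is a $\C$-algebra endomorphism, so $T_u(R^m S) = (T_u R)^m (T_u S)$.

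Assume $P^m \in \Ker(\EE_n)$ for all $m \geq 1$, fix $Q \in \C[w,z]$, and set
\begin{equation*}
\pi_m(u) \;:=\; \EE_n(P^m Q)(u) \;=\; \FF_n\bigl((T_u P)^m\, T_u Q\bigr) \;\in\; \C[u_1,\ldots,u_n].
\end{equation*}
For each fixed $u_0 \in \C^n$, the identity above gives $(T_{u_0}P)^m = T_{u_0}(P^m) \in \Ker(\FF_n)$ for every $m$, so the MZ hypothesis on $\Ker(\FF_n)$ applied to $T_{u_0}P$ with test element $T_{u_0}Q$ yields $\pi_m(u_0) = 0$ for all $m \geq \mu(u_0)$, where the threshold $\mu(u_0)$ \emph{a priori} depends on $u_0$. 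To upgrade this pointwise vanishing to a uniform polynomial identity, I would consider the ascending chain
\begin{equation*}
E_m \;:=\; \bigcap_{m' \geq m} \{u \in \C^n : \pi_{m'}(u) = 0\},
\end{equation*}
whose members are Zariski-closed subsets of $\C^n$ (the defining ideal in the Noetherian ring $\C[u_1,\ldots,u_n]$ is finitely generated). By the previous sentence, $\bigcup_{m \geq 1} E_m = \C^n$.

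A countable union of proper Zariski-closed subvarieties of $\C^n$ has Lebesgue measure zero in $\R^{2n}$ (equivalently, is meager in the Euclidean topology by the Baire category theorem), so such a union cannot exhaust $\C^n$; hence some $E_{m_*}$ must equal $\C^n$. This forces $\pi_{m'} \equiv 0$ for every $m' \geq m_*$, i.e., $\EE_n(P^{m'} Q) = 0$ and thus $P^{m'} Q \in \Ker(\EE_n)$ for all $m' \geq m_*$, yielding the MZ property of $\Ker(\EE_n)$. The main technical step is the passage from pointwise to uniform vanishing: MZ only furnishes thresholds depending on the point $u_0$, and the decisive observation is that the bad loci $\{u : \pi_{m'}(u) \neq 0\}$ are Zariski open, so Noetherianness of $\C[u_1,\ldots,u_n]$ together with the Baire/measure argument in $\C^n$ supplies the required uniformity.
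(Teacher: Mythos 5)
Your proposal is correct and follows essentially the same route as the paper: the translation identity $\EE_n(R)(u)=\FF_n(R(w,z+u))$, pointwise application of the MZ hypothesis at each $u_0$, and then the observation that the sets $E_N$ (the paper's $Z_N$) are Zariski closed with union $\C^n$, so one of them must be all of $\C^n$ since a countable union of proper Zariski-closed subsets cannot exhaust affine space. The only cosmetic difference is that you spell out the Baire/measure justification for that last step, which the paper states without proof.
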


\begin{proof}
Assume that $P^m\in \Ker(\EE_n)$ for $m\geq 1$. Then 
for each $\alpha\in \C^n$ we have 
$$
\EE_n(P^m(w,z))\mid_{z=\alpha}=\EE_n(P^m(w,z+\alpha))\mid_{z=0}=\FF_n(P^m(w,z+\alpha))=0.
$$

Hence $P^m(w,z+\alpha)\in \Ker(\FF_n)$ for all $m\geq 1$. Since $\Ker(\FF_n)$ is an MZ space of $\C[w, z]$, for any 
$Q\in \C[w,z]$ and $\alpha\in \C^n$ we have
 $Q(w,z+\alpha)P(w,z+\alpha)^m\in \ker(\FF_n)$ for all $m\gg 0$. 
Therefore, for all $m\gg 0$ we have
 $$
 \EE_n(Q(w,z)P(w,z)^m)\mid_{z=\alpha}=\FF_n(Q(w,z+\alpha)P(w,z+\alpha)^m)=0.
 $$

Define $Z_N\subseteq \C^n$ to be the zero
set of all $\EE_n(Q(w,z)P(w,z)^m)$ with $m\geq N$. Clearly, $Z_N$ is Zariski closed for all $N$, and $\bigcup_{N=1}^\infty Z_N=\C^n$.
It follows that $Z_N=\C^n$ for some integer $N$, because a countable union of Zariski closed proper subsets cannot be the whole affine space.
So for $m\geq N$, $\EE_n(Q(w,z)P(w,z)^m)$ is the zero function.
\end{proof}

\begin{proposition}\label{prop:GMCisSIC}
If ${\bf GMC}(2n)$ is true, then $\Ker(\FF_n)$ is an MZ space of $\C[w,z]$.
\end{proposition}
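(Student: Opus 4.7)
The plan is to identify $\FF_n$ with a Gaussian expectation written in complex coordinates, and then transfer the hypothesis ${\bf GMC}(2n)$ along the corresponding $\C$-algebra isomorphism.

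First, I let $X_1,Y_1,\dots,X_n,Y_n$ be $2n$ independent real $N(0,1)$ random variables, and set $Z_j=(X_j+iY_j)/\sqrt{2}$ for $1\le j\le n$. Then each $Z_j$ is a standard complex Gaussian with density $\tfrac{1}{\pi}e^{-|z|^2}$ on $\C$, and a direct computation in polar coordinates gives
$$
\E\bigl(\bar Z_j^{\,a} Z_j^{\,b}\bigr)=\delta_{a,b}\,a!.
$$
By independence, for any multi-indices $\alpha,\beta\in\N^n$,
$$
\E\bigl(\bar Z^\alpha Z^\beta\bigr)=\delta_{\alpha,\beta}\,\alpha!=\FF_n(w^\alpha z^\beta),
$$
so extending linearly yields $\FF_n(P)=\E\bigl(P(\bar Z_1,\dots,\bar Z_n,Z_1,\dots,Z_n)\bigr)$ for every $P\in\C[w,z]$.

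Second, I would consider the $\C$-algebra isomorphism
$$
\phi:\C[w_1,\dots,w_n,z_1,\dots,z_n]\longrightarrow\C[X_1,Y_1,\dots,X_n,Y_n]
$$
defined by $w_j\mapsto(X_j-iY_j)/\sqrt{2}$ and $z_j\mapsto(X_j+iY_j)/\sqrt{2}$; bijectivity is clear because these assignments arise from an invertible complex-linear change of variables. The identity of the previous paragraph then reads $\FF_n=\E\circ\phi$, where $\E$ denotes expectation against the standard normal distribution on $\R^{2n}$, and consequently
$$
\Ker(\FF_n)=\phi^{-1}\bigl(\Ker(\E)\bigr).
$$

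Finally, in its reformulated form ${\bf GMC}(2n)$ asserts precisely that $\Ker(\E)$ is an MZ space of $\C[X_1,Y_1,\dots,X_n,Y_n]$. Since $\phi$ is an isomorphism of $\C$-algebras, $\phi(P^m Q)=\phi(P)^m\phi(Q)$, so the MZ-space property transfers through $\phi^{-1}$, and we conclude that $\Ker(\FF_n)$ is an MZ space of $\C[w,z]$. The only genuinely computational step is the classical moment identity $\E(\bar Z_j^{\,a}Z_j^{\,b})=\delta_{a,b}\,a!$ for the standard complex Gaussian; everything else is formal bookkeeping along the linear substitution, so I do not expect any serious obstacle beyond writing this transport of structure down carefully.
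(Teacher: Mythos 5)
Your proposal is correct and follows essentially the same route as the paper: both identify $\FF_n(w^\alpha z^\beta)=\delta_{\alpha,\beta}\,\alpha!$ with the mixed moments $\E(\bar Z^\alpha Z^\beta)$ of independent standard complex Gaussians built from $2n$ real ones, and then invoke ${\bf GMC}(2n)$. Your version merely makes explicit the algebra isomorphism through which the MZ property transfers (a step the paper compresses into ``it follows readily''), and computes the moments from the density $\tfrac1\pi e^{-|z|^2}$ rather than via the polar decomposition $Z_j=R_je^{iT_j}$ with $R_j^2$ exponential, which is the same calculation.
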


\begin{proof}
Let $X_1,\dots, X_n,$ $Y_1,\dots,Y_n$ are $2n$ independent random variables with the normal distribution and with mean $0$ and variance $1$.
Define complex-valued random variables $W_j$, $Z_j$ and real-valued random variables $R_j$ and $T_j$ by
$$
W_j=\frac{X_j-Y_ji}{\sqrt{2}}=R_je^{-iT_j}\mbox{ and } Z_j=\frac{X_j+Y_ji}{\sqrt{2}}=R_je^{iT_j}.
$$
Then $R_1,\dots,R_n,T_1,\dots,T_n$ are independent, and for every $1\le j\le n$, 
$R_j^2$ has an exponential distribution with mean $1$ and $\E(R_j^{2k})=k!$.
Now consider 
$$
\E(W^\alpha Z^\beta)=\E(R^{\alpha+\beta}e^{i\sum_{j}(\beta_j-\alpha_j)T_j})=\prod_{j=1}^n\Big( \E(R^{\alpha_j+\beta_j})\E(e^{i(\beta_j-\alpha_j)T_j})\Big).
$$
If $\beta\neq \alpha$, then $\beta_j\neq \alpha_j$ for some $j$, whence  
$\E(e^{i(\beta_j-\alpha_j)T_j})=0$ and $\E(W^\alpha Z^\beta)=0$. If $\alpha=\beta$,  then we have
$$
\E(W^\alpha Z^\alpha)=\E(R^{2\alpha})=\prod_j \E(R_j^{2\alpha_j})=\prod_j\alpha_j!=\alpha!
$$
It follows that $\E(W^\alpha Z^\beta)=\FF_n(w^\alpha z^\beta)$ for all $\alpha,\beta\in \N^n$. By linearity, we get
$\E(Q(W,Z))={\mathcal F}_n(Q(w,z))$ for every polynomial $Q(w,z)\in \C[w,z]$. It follows readily from ${\bf GMC}(2n)$ that
$\Ker\FF_n$ is an MZ space of $\C[w, z]$. 
\end{proof}

Now we can prove our main result Theorem~\ref{theo:MCisJC}:    

\begin{proof}[Proof of Theorem~\ref{theo:MCisJC}]
It follows directly from Proposition~\ref{prop:EisF}, Proposition~\ref{prop:GMCisSIC} and Theorem~\ref{theo:Zhao}.
\end{proof}

\renewcommand{\theequation}{\thesection.\arabic{equation}}
\setcounter{equation}{0}

\section{\bf Some Special Cases of the Gaussian Moments Conjecture}

We view $\C[x_1,\dots,x_n]$ as the coordinate ring of $V\cong \C^n$, where $V$ is viewed as the standard representation  of ${\rm O}(n)$.
\begin{proposition}
For homogeneous polynomials $P(x)$, ${\bf GMC}(n)$ follows from ${\bf MC}(V)$.
\end{proposition}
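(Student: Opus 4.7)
The plan is to translate the $\E$-vanishing hypothesis of $\operatorname{\bf GMC}(n)$ into a $\Reynolds_V$-vanishing hypothesis, apply $\operatorname{\bf MC}(V)$, and translate back. The bridge is that the standard Gaussian density on $\R^n$ is ${\rm O}(n)$-invariant, so $\E((g\cdot f)(X))=\E(f(X))$ for every $g\in{\rm O}(n)$ and every $f\in\C[V]$; averaging this identity over $K={\rm O}(n)$ against the Haar measure and exchanging integrations yields the key identity
$$
\E(f)\;=\;\E(\Reynolds_V(f))\qquad\text{for every } f\in\C[V].
$$

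Next I would invoke the classical fact that $\C[V]^{{\rm O}(n)}=\C[r^2]$ with $r^2=x_1^2+\cdots+x_n^2$. In particular, a homogeneous ${\rm O}(n)$-invariant polynomial is $0$ in odd degrees and a scalar multiple of $r^{2k}$ in each even degree $2k$; moreover, since $r^2$ has a chi-squared distribution, $\E(r^{2k})>0$ for every $k\ge 0$. Now suppose $P$ is homogeneous of degree $d$ with $\E(P^m)=0$ for all $m\ge 1$. For each $m$, the polynomial $\Reynolds_V(P^m)$ is homogeneous ${\rm O}(n)$-invariant of degree $md$, hence vanishes when $md$ is odd, while in the even case $\Reynolds_V(P^m)=c_m r^{md}$ for some $c_m\in\C$; the key identity then forces $0=\E(P^m)=c_m\E(r^{md})$, so $c_m=0$. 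Thus $P^m\in\Ker(\Reynolds_V)$ for every $m\ge 1$.

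At this point $\operatorname{\bf MC}(V)$ applies: for each $Q\in\C[V]$, $P^mQ\in\Ker(\Reynolds_V)$ for $m\gg 0$, and the key identity yields $\E(P^mQ)=\E(\Reynolds_V(P^mQ))=0$ for $m\gg 0$, which is exactly the conclusion of $\operatorname{\bf GMC}(n)$. The decisive point is the implication $\E(P^m)=0\Rightarrow\Reynolds_V(P^m)=0$, and this is where homogeneity of $P$ is essential: it confines $\Reynolds_V(P^m)$ to the one-dimensional subspace $\C\cdot r^{md}$, on which the single scalar $\E$ is injective. Without homogeneity, $\Reynolds_V(P^m)$ could be a nonzero polynomial in $r^2$ with vanishing Gaussian expectation, so this approach does not extend beyond the homogeneous case.
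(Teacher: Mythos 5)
Your proof is correct and follows essentially the same route as the paper: both reduce ${\bf GMC}(n)$ for homogeneous $P$ to ${\bf MC}(V)$ by exploiting that the ${\rm O}(n)$-invariants (equivalently, the invariant linear functionals) in each graded piece of $\C[x_1,\dots,x_n]$ form an at most one-dimensional space, so that $\E$ and $\Reynolds_V$ detect the same vanishing on powers of a homogeneous polynomial. Your identity $\E(f)=\E(\Reynolds_V(f))$ combined with $\E(r^{2k})>0$ makes explicit the nonvanishing of the proportionality constant that the paper leaves implicit, and it also lets you treat an arbitrary $Q$ in one stroke where the paper decomposes $Q$ into homogeneous summands at the end.
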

\begin{proof}
Let $\Phi:\C[x_1,\dots,x_n]\to \C$ be given by $\Phi(P(x))=\E(P(X))$.
Any linear map $\C[x_1,\dots,x_n]_d\to \C$ is determined by an element of $S^d(V)$. Since $\Phi$ is invariant under the action of $O(n)$
it is given by an element of $S^d(V)^{{\rm O}(n)}$. But $S^d(V)^{{\rm O}(n)}$ is at most one dimensional and is spanned by the restriction of the Reynolds operator $\Reynolds_V$.
So up to a constant, $\Phi(P(x)^m)$ is equal to $\Reynolds_V(P(x)^m)$. If $\E(P(X)^m)=0$ for $m\geq 1$,
then $\Reynolds_V(P(X)^m)=0$ for $m\geq 1$. If $Q(x)$ is homogeneous, then $\Reynolds_V(P(x)^mQ(x))=0$ for $m\gg 0$.
So $\E(P(X)^mQ(X))=0$ for $m\gg 0$. If $Q(X)$ is non-homogeneous then  $\E(P(X)^mQ(X))=0$ for $m\gg 0$, because $\E(P(X)Q_d(X))=0$ for $m\gg 0$ for every
homogeneous summand $Q_d(x)$ of $Q(x)$.
\end{proof}

\begin{proposition}\label{Propo4.2}
Suppose that $X$ is a Gaussian Random Variable, and  $P(x)\in \C[x]$ is a univariate polynomial such that 
 $\E(P(X)^m)=0$ for $m\geq 1$, then $P(x)=0$. In particular, 
${\bf GMC}(n)$ is true for $n=1$.
\end{proposition}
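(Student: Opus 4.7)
I will argue by contradiction. Assume $P\in\C[x]$ is nonzero; after dividing by its leading coefficient (which does not affect the vanishing of the moments) we may assume $P$ is monic of degree $d\ge 1$. If $P$ happens to have real coefficients, the conclusion is immediate from the positivity
\[
\E\bigl(P(X)^{2}\bigr)=\frac{1}{\sqrt{2\pi}}\int_{\R} P(x)^{2}\,e^{-x^2/2}\,dx\ge 0,
\]
with equality only when $P\equiv 0$, so already $\E(P^2)=0$ forces $P=0$. For general complex $P$ this positivity trick is not available, and my plan is to extract the large-$m$ asymptotics of
\[
\E\bigl(P(X)^{m}\bigr)=\frac{1}{\sqrt{2\pi}}\int_{\R} e^{\Phi_m(x)}\,dx,\qquad \Phi_m(x)=m\log P(x)-\tfrac12 x^2,
\]
via the saddle-point (steepest-descent) method, and to show that the resulting asymptotic is nonzero for infinitely many $m$, contradicting the hypothesis.

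The stationary equation $\Phi_m'(x)=0$ reads $mP'(x)=xP(x)$, a polynomial equation of degree $d+1$. Balancing the top-degree terms $x^{d+1}$ against $mdx^{d-1}$ shows that two of its roots --- the ``runaway'' saddles --- satisfy $x_\pm=\pm\sqrt{md}+O(1)$ as $m\to\infty$, while the other $d-1$ roots stay bounded, clustering near the critical points of $P$. At the runaway saddles the integrand has magnitude $|P(x_\pm)|^m e^{-x_\pm^2/2}$ of order $(md)^{dm/2}e^{-md/2}$, whereas at the bounded saddles it is only $O(C^m)$; this super-exponential gap guarantees that the runaway saddles dominate. A careful expansion of $\Phi_m$ at $x_\pm$ keeping the subleading coefficient $a_{d-1}$ of $P$ yields
\[
\E\bigl(P(X)^m\bigr) \;\sim\; C_0\,(md)^{dm/2}\,e^{-md/2}\,\Bigl[\,e^{\,a_{d-1}\sqrt{m/d}}+(-1)^{dm}e^{-a_{d-1}\sqrt{m/d}}\,\Bigr],\qquad C_0\ne 0,
\]
with $C_0=1/\sqrt{2}$ coming from the standard Gaussian normalization at a nondegenerate saddle with $\Phi_m''(x_\pm)\to -2$. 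The bracketed factor is nonzero for infinitely many $m$ in all cases: it grows exponentially when $\mathrm{Re}(a_{d-1})\neq 0$, it oscillates through nonzero values when $a_{d-1}\in i\R\setminus\{0\}$, and it equals $1+(-1)^{dm}$ (nonzero whenever $dm$ is even) when $a_{d-1}=0$. Hence $\E(P(X)^m)\neq 0$ for infinitely many $m$, the required contradiction.

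The chief technical hurdle is a rigorous justification of the saddle-point expansion for complex-coefficient $P$: the runaway saddles $x_\pm$ may then lie off the real axis, so one must deform the contour $\R$ onto the steepest-descent paths through $x_\pm$ and carry out a Stokes-phenomenon-type check that the bounded saddles are either not crossed or are crossed only at subdominant order. The super-exponential gap between the dominant and subdominant contributions makes this structural dominance automatic --- the bounded saddles simply cannot absorb a term of order $(md)^{dm/2}e^{-md/2}$ with corrections of order $C^m$ --- so the remaining effort is the standard Laplace-method bookkeeping of error terms and the verification that $\Phi_m''(x_\pm)$ stays bounded away from $0$.
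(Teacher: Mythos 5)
Your route is genuinely different from the paper's, which disposes of this proposition in one line by quoting Corollary 4.3 of \cite{EZ} (the $n=1$, Hermite-polynomial case of the Image Conjecture) and also records a self-contained arithmetic alternative (Proposition \ref{Propo4.6} together with Remark \ref{Rmk4.8}): using $\E(X^{2k})=(2k-1)!!$, Boyarchenko's reduction to algebraic coefficients, and a $p$-adic valuation computation of $\E(F(X)^p)$ modulo a suitable prime, one gets nonvanishing with no analysis at all. Your steepest-descent strategy is analytically natural and I believe it can be completed, but as written it is a program rather than a proof, and the missing steps sit exactly where the argument could fail.

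Concretely: (i) for complex coefficients the runaway saddles $x_\pm$ leave the real axis, and the contour deformation is only asserted, not performed; it is plausible (the saddles escape to infinity while the zeros of $P$, i.e.\ the singularities of $\log P$, stay bounded, and the middle of the contour contributes only $O(C^m)$ against $(md)^{dm/2}e^{-dm/2}$), but you acknowledge yourself that this is the heart of the matter. (ii) ``The bracketed factor is nonzero for infinitely many $m$'' is not enough: each saddle contribution carries a multiplicative error $1+O(1/m)$, so when $\mathrm{Re}(a_{d-1})=0$ the two contributions have equal magnitude and can nearly cancel; you need the bracket bounded below for infinitely many $m$, which for $a_{d-1}\in i\R\setminus\{0\}$ requires an argument that $\beta\sqrt{m/d}$ lands infinitely often in, say, $\bigl[k\pi-\tfrac{\pi}{3},k\pi+\tfrac{\pi}{3}\bigr]$ (true, since consecutive gaps tend to $0$, but it is a missing ingredient, not bookkeeping). (iii) The displayed asymptotic with a single constant $C_0=1/\sqrt{2}$ is not correct as stated: the $O(1)$ contributions to $\Phi_m(x_\pm)$ from the lower coefficients of $P$ and from the $O(1)$ displacement of $x_\pm$ from $\pm\sqrt{md}$ attach constants $c_\pm$ to the two saddles, and one must check $c_+/c_-\to 1$ (it does, because the surviving corrections are even in $x$), since otherwise the case $a_{d-1}=0$, $dm$ even could still cancel. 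None of these looks fatal, but each must be supplied before this counts as a proof; the paper's arithmetic argument avoids all of them.
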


\begin{proof} As observed in the beginning of this paper,  ${\bf GMC}(n)$ is a special case of
the Image Conjecture for Hermite polynomials. For $n=1$ the case of Hermite polynomials is proved in Corollary 4.3 of \cite{EZ}.
\end{proof}

For a different proof of ${\bf GMC}(1)$, see Proposition \ref{Propo4.6} and 
Remark \ref{Rmk4.8} of this section. 
 
\begin{proposition}\label{DoubleHgs}
Let $P\in \C[x_1,   ..., x_n, \, y_1,   ..., y_n]$ such that 
for each $1\le k\le n$ $P(x, y)$ as a polynomial 
in $x_k$ and $y_k$ is homogeneous. Then ${\bf GMC}(2n)$ holds for $P$. 
\end{proposition}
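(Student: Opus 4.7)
The plan is to reduce to the Duistermaat--van der Kallen theorem (Theorem~\ref{DKThm}) by performing the change of variables used in the proof of Proposition~\ref{prop:GMCisSIC} and then passing to polar coordinates within each pair $(W_k,Z_k)$ so that a radial factor decouples from an angular factor.

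First, I would write $W_j=(X_j-iY_j)/\sqrt{2}=R_je^{-iT_j}$ and $Z_j=(X_j+iY_j)/\sqrt{2}=R_je^{iT_j}$, giving $P(X,Y)=\tilde{P}(W,Z)$ for a polynomial $\tilde P$. Because the change of variables is $\C$-linear within each pair $(x_k,y_k)$, the hypothesis that $P$ is homogeneous of some degree $d_k$ in $(x_k,y_k)$ implies that $\tilde P$ is homogeneous of degree $d_k$ in $(w_k,z_k)$. Every monomial $w^\alpha z^\beta$ of $\tilde P$ thus satisfies $\alpha_k+\beta_k=d_k$ for each $k$, and evaluates on $(W,Z)$ to $\prod_k R_k^{d_k}\cdot\prod_k e^{i(\beta_k-\alpha_k)T_k}$. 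Collecting these contributions yields the key factorization
$$
\tilde{P}(W,Z)=\Bigl(\prod_{k=1}^n R_k^{d_k}\Bigr)\,L\bigl(e^{iT_1},\dots,e^{iT_n}\bigr),
$$
where $L\in\C[s_1^{\pm 1},\dots,s_n^{\pm 1}]$ is a Laurent polynomial canonically attached to $P$.

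Second, since $R_1,\dots,R_n,T_1,\dots,T_n$ are independent and each $T_k$ is uniform on $[0,2\pi)$, taking expectations factors as $\E\bigl(\tilde{P}^m\bigr)=\bigl(\prod_k \E(R_k^{md_k})\bigr)\cdot\E(L(S)^m)$, where $S_k=e^{iT_k}$. Each $\E(R_k^{md_k})$ is a value of the Gamma function and hence nonzero, while integrating each $T_k$ uniformly over $[0,2\pi)$ is the same as extracting the constant term of a Laurent polynomial. So the assumption $\E(P^m)=0$ for all $m\ge 1$ is equivalent to the statement that $L^m$ has vanishing constant term for every $m\ge 1$, which is precisely the hypothesis of Theorem~\ref{DKThm} applied to $L$.

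Third, for an arbitrary $Q\in\C[x,y]$ I would expand $\tilde Q(W,Z)=\sum_{\alpha,\beta}c_{\alpha,\beta}w^\alpha z^\beta$ as a finite sum and compute
$$
\E\bigl(\tilde P(W,Z)^m\tilde Q(W,Z)\bigr)=\sum_{\alpha,\beta}c_{\alpha,\beta}\Bigl(\prod_k \E(R_k^{md_k+\alpha_k+\beta_k})\Bigr)\cdot[s^0]\Bigl(L(s)^m\prod_k s_k^{\beta_k-\alpha_k}\Bigr),
$$
where $[s^0]$ denotes the constant term. By Theorem~\ref{DKThm}, for each of the finitely many Laurent monomials $\prod_k s_k^{\beta_k-\alpha_k}$ arising from $\tilde Q$ the constant term vanishes for all sufficiently large $m$; taking the maximum of this finite list of thresholds makes every term in the sum vanish simultaneously, which is the desired conclusion. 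The main delicate point, and the only place where the doubly-homogeneous hypothesis is essential, is the factorization $\tilde{P}=(\prod_k R_k^{d_k})\,L(e^{iT})$: if $P$ were merely homogeneous in $(x,y)$ jointly, the radial exponents $\alpha_k+\beta_k$ would vary across monomials of $\tilde P$, the radial and angular variables would fail to decouple, and the clean reduction to constant terms of Laurent powers $L^m$ would break down.
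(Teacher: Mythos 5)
Your proof is correct and follows essentially the same route as the paper: both use the double homogeneity to split $P$ into a radial factor $\prod_k R_k^{d_k}$ times an angular factor, observe that the radial moments are nonzero, and reduce the angular part to the constant-term statement of the Duistermaat--van der Kallen theorem, handling a general $Q$ monomial by monomial. The only (cosmetic) difference is that you work in the complex coordinates $W_k, Z_k$ and a Laurent polynomial $L(e^{iT})$, whereas the paper uses real polar coordinates $x_k=r_k\cos\theta_k$, $y_k=r_k\sin\theta_k$ and a trigonometric polynomial $F$ --- these are the same decomposition.
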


\begin{proof} 
For each $1\le k\le n$, let $d_k$ be the degree of $f$ as a polynomial 
in $x_k$ and $y_k$. 

Making the change of variables for $x_i$ and $y_i$ $(1\le i\le n)$:
$$
x_i=r_i\cos \theta_i \quad \text{and} \quad
y_i=r_i\sin \theta_i,
$$
we see that $P=(r_1^{d_1}r_2^{d_2}\cdots r_n^{d_n}) F$ 
for some polynomial $F$ in $\cos\theta_i$ and $\sin\theta_i$ $(1\le i\le n)$, which is independent on $r_i$ $(1\le i\le n)$. 

Let $S^n:=(S^1)^{\times n}$, where $S^1$ is the unit circle in $\C$. 
Denote by $d\mu_n$ the measure of $d\theta_1d\theta_2\cdots d\theta_n$, which is a haar measure of the torus $S_n$.
Then $F$ can be viewed as $S^n$-finite function over the torus $S^n$.
Furthermore, for any $m\ge 1$ we have
\begin{align}\label{DoubleHgs-peq1}
\E(P^m(X, Y))&=\int_{r_1=0}^1 \cdots \int_{r_n=0}^1 
(r_1^{md_1+1}\cdots r_n^{md_n+1})
\big (\int_{S^n} F^m d\mu_n\big) \, dr_1\cdots dr_n\\
&=A_m\int_{S^n} F^m d\mu_n, \nonumber 
\end{align}
for some nonzero constant $A_m$.    

Hence, if $\E(P^m)=0$ when $m\gg 0$, then so is $\int_{S_n} F^m$. 
Since $d\mu_n$ is a Haar measure of the torus $S_n$, applying 
the Duistermaat-van der Kallen Theorem \ref{DKThm} to $F$  
we see that for each polynomial $G$ in 
$\cos\theta_i$ and $\sin\theta_i$ $(1\le i\le n)$, 
we have $\int_{S^n} F^m G\, d\mu_n=0$ when $m\gg 0$. 

Now for each monomial $M(x, y)$ in $x_i$ and $y_i$ $(1\le i\le n)$, 
by Eq.\,(\ref{DoubleHgs-peq1}) with $P^m$ replaced by $P^mM$, 
we see that $\E(P^mM)=0$ when $m\gg 0$. Hence for each polynomial 
$Q(x, y)$, we also have $\E(P^mQ)=0$ when $m\gg 0$. 
Therefore ${\bf GMC}(2n)$ holds for $P$.  
\end{proof}

Since every homogeneous polynomial in two variables 
satisfies the condition of Proposition \ref{DoubleHgs}, we 
immediately have the following 

\begin{corollary}
${\bf GMC}(2)$ holds for all homogeneous polynomials $P$. 
\end{corollary}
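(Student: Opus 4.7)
The plan is to deduce the corollary as an immediate specialization of Proposition \ref{DoubleHgs} to the case $n=1$. Once we instantiate that proposition with $n=1$, the ambient ring becomes $\C[x_1,y_1]$, and the only hypothesis remaining is that $P$ be homogeneous as a polynomial in the single pair of variables $(x_1,y_1)$. Since this is nothing more than ordinary homogeneity in two variables, any homogeneous $P\in\C[u,v]$ satisfies the hypothesis after relabeling $u=x_1,\,v=y_1$.

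Concretely, I would begin by letting $P(u,v)$ be an arbitrary homogeneous polynomial of some degree $d\ge 0$, rename the coordinates of $\C^2$ as $(x_1,y_1)$, and verify that $P$ viewed in $\C[x_1,y_1]$ is homogeneous in the single pair $(x_1,y_1)$, which is tautological. Then Proposition \ref{DoubleHgs} applied with $n=1$ gives the full ${\bf GMC}(2)$ conclusion for $P$: namely, whenever $\E(P(X_1,Y_1)^m)=0$ for all $m\ge 1$, we have $\E(P(X_1,Y_1)^mQ(X_1,Y_1))=0$ for $m\gg 0$ and every $Q\in\C[x_1,y_1]$. This is exactly the content of the corollary.

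Since the proof is purely a matter of matching hypotheses, I do not anticipate any real obstacle; all of the substantive work has already been done inside Proposition \ref{DoubleHgs} (the polar-coordinate reduction of the moment integrals to an integral of an $S^1$-finite function on the torus, followed by an invocation of the Duistermaat–van der Kallen Theorem \ref{DKThm}). The only thing worth emphasizing is why the $n=1$ case of the proposition already covers \emph{every} homogeneous bivariate polynomial: the condition that $P$ be homogeneous in each pair $(x_k,y_k)$ collapses, for $n=1$, to ordinary bivariate homogeneity, which is assumed by definition in the corollary.
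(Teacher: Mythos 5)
Your proposal is correct and matches the paper's own argument exactly: the corollary is obtained by specializing Proposition \ref{DoubleHgs} to $n=1$, where the hypothesis of homogeneity in each pair $(x_k,y_k)$ reduces to ordinary homogeneity in two variables. No further comment is needed.
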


By a similar argument as in the proof of Proposition \ref{DoubleHgs}, 
we have also the following case of Conjecture $3.2$ in \cite{Zhao2}:

\begin{corollary}
Let $B$ be the unit disk in $\R^2$ centered at 
the origin with the Lebesgue measure $dxdy$. 
Let $P\in\C[x, y]$ such that $P$ is homogeneous and 
$\int_B P^m\, dxdy=0$ for all $m\gg 0$. Then for every 
$Q\in\C[x, y]$ we have $\int_B P^mQ\, dxdy=0$ 
for all $m\gg 0$.
\end{corollary}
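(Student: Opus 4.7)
The plan is to mirror the argument for Proposition~\ref{DoubleHgs}, substituting the polar decomposition on the unit disk for the torus action. First I would pass to polar coordinates $x=r\cos\theta$, $y=r\sin\theta$. Since $P$ is homogeneous of some degree $d$, I can write $P(x,y)=r^{d}F(\theta)$, where $F$ is a polynomial in $\cos\theta,\sin\theta$, and hence a Laurent polynomial in $e^{i\theta}$. The case $d=0$ is trivial (then $P$ is a constant, and $\int_B P^m\,dxdy=0$ for $m\gg 0$ forces $P=0$), so I assume $d\geq 1$.

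Next, decompose any given $Q\in\C[x,y]$ into homogeneous components $Q=\sum_{k=0}^{N}Q_k$, and write each $Q_k(x,y)=r^{k}G_k(\theta)$ with $G_k$ a Laurent polynomial in $e^{i\theta}$. A direct calculation in polar coordinates gives
$$
\int_B P^{m}Q_k\,dxdy=\int_{0}^{1}\!\!\int_{0}^{2\pi}r^{md+k+1}F^{m}G_k\,d\theta\,dr=\frac{1}{md+k+2}\int_{S^{1}}F^{m}G_k\,d\theta.
$$
Specialising to $Q=1$, the hypothesis $\int_B P^{m}\,dxdy=0$ for $m\gg 0$ yields $\int_{S^{1}}F^{m}\,d\theta=0$ for $m\gg 0$; conversely, to establish the conclusion it suffices to show $\int_{S^{1}}F^{m}G_k\,d\theta=0$ for $m\gg 0$ for each $k$, since a finite sum of such terms vanishes eventually.

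The main input for the last step is the Duistermaat--van der Kallen theorem (Theorem~\ref{DKThm}) applied to $F$ on $S^{1}$. The slight nuisance is that Theorem~\ref{DKThm} is stated for powers $F^m$ with $m\geq 1$, whereas we only know the vanishing $\int_{S^{1}}F^{m}\,d\theta=0$ for $m\geq N_0$ for some $N_0$. I would handle this by applying Theorem~\ref{DKThm} to $F^{N_0}$ instead of $F$: all its positive integer powers lie in the MZ space $M$, so for every Laurent polynomial $H$ we get $\int_{S^{1}}F^{N_0 m}H\,d\theta=0$ for $m\gg 0$. Then, for each residue $r\in\{0,1,\dots,N_0-1\}$, applying this with $H=F^{r}G_k$ yields $\int_{S^{1}}F^{N_0 m+r}G_k\,d\theta=0$ for $m\gg 0$; taking the max over the finitely many residues covers all sufficiently large exponents.

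I expect the only genuinely delicate point to be this bookkeeping passage from "vanishing for $m\gg 0$" to the hypothesis of Theorem~\ref{DKThm}; once that is handled, everything else is a direct polar-coordinates computation. The degeneracy at $d=0$ is the only other edge case, and it is disposed of trivially.
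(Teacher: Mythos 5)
Your proof is correct and takes essentially the same route the paper intends: the paper's entire justification is ``by a similar argument as in the proof of Proposition~\ref{DoubleHgs}'', i.e.\ pass to polar coordinates, factor out the radial integral, and apply the Duistermaat--van der Kallen theorem to the resulting Laurent polynomial in $e^{i\theta}$ on $S^1$. Your extra step of replacing $F$ by $F^{N_0}$ to reconcile the hypothesis ``vanishing for $m\gg 0$'' with the ``all $m\geq 1$'' requirement in the definition of an MZ space is a detail the paper glosses over, and you handle it correctly.
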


In the rest of this section we point out that some 
results proved in \cite{EWZ} for the Factorial Conjecture 
(\cite[Conjecture 4.2]{EWZ}) 
can also be proved similarly for ${\bf GMC}(n)$.   

First, we give a proof for the following case of 
${\bf GMC}(n)$, which is parallel to 
\cite[Proposition $4.8$]{EWZ}. 

\begin{proposition}\label{Propo4.6-0}
Let $F(x)\in \C[x_1, x_2, ..., x_n]$ such that $F(0)\ne 0$.  
Then $\mathbb E (F^m(X))\ne 0$ for infinitely many $m\ge 1$. 
\end{proposition}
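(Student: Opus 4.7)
The plan is by contradiction, reducing via a binomial transform and then appealing to a growth estimate on Gaussian moments.

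Suppose $c := F(0) \neq 0$ but $\mathbb{E}(F^m(X)) = 0$ for all $m \geq M$ (for some $M \geq 1$). Writing $F = c + G$ with $G(0) = 0$, the vanishing hypothesis becomes
$$\sum_{k=0}^m \binom{m}{k} c^{m-k} \mathbb{E}(G^k(X)) = 0 \quad \text{for all } m \geq M.$$
Since $c \neq 0$, inverting the binomial transform of the sequence $\mathbb{E}(G^k(X))/(-c)^k$ yields $\mathbb{E}(G^m(X)) = (-c)^m R(m)$ for all $m \geq 0$, where $R \in \mathbb{C}[m]$ is a polynomial of degree at most $M-1$ with $R(0) = 1$. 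Setting $H := -G/c$, the problem reduces to showing that no polynomial $H \not\equiv 0$ with $H(0) = 0$ satisfies $\mathbb{E}(H(X)^m) = R(m)$, a polynomial in $m$ with $R(0) = 1$, for all $m \geq 0$. Note that $H \not\equiv 0$ may be assumed since a constant $F = c$ would already violate the vanishing hypothesis.

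For real-valued $H$ of degree $d \geq 1$, this follows from a Laplace-type lower bound. Picking a direction $\omega \in S^{n-1}$ where the leading homogeneous component of $H$ is nonzero, and restricting the Gaussian integral $(2\pi)^{-n/2}\int H(x)^{2m} e^{-|x|^2/2}\,dx$ to a small cone around $\omega$, the radial part is estimated through a $\Gamma$-function evaluation to give $\mathbb{E}(H(X)^{2m}) \gtrsim C^m (dm)^{dm}$ for some $C > 0$. This super-polynomial growth rate contradicts the polynomial form $R(2m)$.

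The main obstacle is the genuinely complex case, where cancellations between real and imaginary parts can keep $|\mathbb{E}(H^m(X))|$ small. A naive attempt, applying the real estimate to $|H|^2 = H\bar{H}$ (a nontrivial non-negative real polynomial vanishing at the origin), shows $\mathbb{E}(|H(X)|^{2m})$ grows super-polynomially, but Cauchy--Schwarz gives $|\mathbb{E}(H^m(X))|^2 \leq \mathbb{E}(|H(X)|^{2m})$ in the wrong direction. To circumvent this I would exploit the formal identity $\sum_m R(m) t^m/m! = Q(t) e^t$, an entire function of exponential type, and compare it with the corresponding $\mathbb{E}(e^{t H(X)})$: for $\deg H \leq 2$, the latter converges and has an explicit Gaussian or determinantal form ($\det(I-2tA)^{-1/2}$ times an exponential of a rational function of $t$), which cannot equal a polynomial times $e^{t}$ due either to a mismatch of order or to the appearance of poles. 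For $\deg H \geq 3$, where the moment generating function diverges, one argues in parallel with \cite[Proposition~4.8]{EWZ} for the factorial functional, using the Hermite correspondence of Proposition~\ref{prop:GMCisSIC} to transfer the vanishing moments to the $(W,Z)$-variables of Section~3 and then invoking the analogous asymptotic estimate from the factorial-functional setting to force $H \equiv 0$. Carrying out this last reduction cleanly is the most delicate step.
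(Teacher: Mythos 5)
There is a genuine gap. Your binomial-transform reduction is correct and clean: from $\E(F^m(X))=0$ for $m\ge M$ one does get $\E(G^k(X))=(-c)^kR(k)$ with $R$ a polynomial of degree $<M$ and $R(0)=1$, and for \emph{real-valued} $H=-G/c$ the super-polynomial lower bound on $\E(H(X)^{2m})$ via a cone/Laplace estimate does finish that case (positivity of $H^{2m}$ lets you discard the rest of the integral). But the proposition is about $F\in\C[x_1,\dots,x_n]$, so the complex case is the entire content, and your proposal does not close it. You correctly observe that Cauchy--Schwarz bounds $|\E(H^m)|^2$ by $\E(|H|^{2m})$ from the wrong side; the moment-generating-function argument is restricted to $\deg H\le 2$; and for $\deg H\ge 3$ you defer to ``the analogous asymptotic estimate from the factorial-functional setting'' --- but no such estimate is stated or proved, and this is exactly the step that needs an argument. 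As written, the proof establishes the statement only for real polynomials (plus, modulo details, complex ones of degree at most two).

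The paper's proof avoids analysis entirely and works uniformly for complex coefficients: after normalizing $F(0)=1$ and writing $F=1-\sum_i c_ix^{\beta_i}$, one replaces $F$ by $F^2$ so that some $\beta_i$ is even and nonzero, reduces the $c_i$ to algebraic numbers by Boyarchenko's specialization trick, and picks an odd prime $p\ge N$ with $\nu_p(c_i)=0$. Then $F^p\equiv 1-\sum_i c_i^p x^{p\beta_i}\pmod{pB}$, and applying the explicit Gaussian moment functional $\Phi(x^\alpha)=(\alpha-1)!!$ (for $\alpha\in 2\N^n$, else $0$) to $\Phi(F^p)=0$ gives $1\equiv\sum c_i^p(p\beta_i-1)!!\pmod{pB}$; since each surviving $(p\beta_i-1)!!$ is divisible by $p$, taking $\nu_p$ yields the contradiction $\nu_p(1)\ge 1$. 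If you want to salvage your approach, you would need a genuinely new idea to control $|\E(H(X)^m)|$ from below for complex $H$ of degree $\ge 3$ in the presence of cancellation; otherwise the arithmetic mod-$p$ route is the one that actually works here.
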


\begin{proof}
Let $\Phi:\C[x_1,\dots,x_n]\to \C$ be given by $\Phi(P(x))=\E(P(X))$. Set $(-1)!!:=1$ and 
$(2k-1)!!:=(2k-1)(2k-3)\cdots 3\cdot 1$ for all $k\ge 1$. Furthermore, 
for each $\alpha=(\alpha_1, \alpha_2, \dots, \alpha_n)\in 2\N$, we set 
$(\alpha-1)!!:=\prod_{i=1}^n (\alpha_i-1)!!$.   
Then for each 
$\alpha\in \N^n$, we have 
\begin{align}\label{P4.6-0-peq1}
\Phi(x^\alpha)=
\begin{cases}
 (\alpha-1)!! &\text{ if } \alpha\in 2\N^n;\\
\quad\,\, 0 &\text{ otherwise.} 
\end{cases}
\end{align}

Now assume that the proposition fails, i.e., 
there exists $N\ge 1$ such that $\Phi(F^m)=0$ 
for all $m\ge N$. Since $F(0)\ne 0$, replacing $F$ by $F/F(0)$  
we may assume $F(0)=1$. Write $F(x)=1- \sum_{i=1}^k c_i x^{\beta_i}$ 
with $c_i\in \C$ and $0\ne \beta_i\in \N^n$ for 
all $1\le i\le k$. 

Note that if $c_i=0$ for all $1\le i\le k$, i.e., $F(x)=1$, the proposition 
obviously holds. So we assume $c_i\ne 0$ for all $1\le i\le k$. 
Replacing $F$ by $F^2$  we may also assume that $0\ne \beta_i\in 2\N$  
for at least one $1\le i\le k$. 

Furthermore, by a reduction due to Mitya Boyarchenko (see the proof of 
\cite[Theorem $4.1$]{FPYZ} or \cite[Remarks $4.5$ and $4.6$]{EWZ}), 
we may also assume that $c_i\in \bar \Q$ for all $1\le i\le k$.   

Let $B=\Z[c_1, c_2, ..., c_k]$ and $p$ 
be an odd prime such that $p\ge N$ 
and $\nu_p(c_i)=0$ for all $1\le i\le k$, where $\nu_p$ denotes an 
extension of the $p$-valuation of $\Z$ to $B$.

Since $p\ge N$ and $F^p\equiv 1-\sum_{i=1}^k c_i^p x^{p\beta_i} 
\,\,(\operatorname{mod } pB)$, 
we have  $\Phi(F^p)=0$ and 
\begin{align}\label{P4.6-0-peq3}
1\equiv \sum_{\substack{1\le i\le k\\ 0\ne \beta_i\in 2\N^n}} c_i^p \,
{(p\beta_i-1)!!} \,\,(\operatorname{mod } pB).
\end{align}
 
Since each $0\ne \beta_i\in 2\N^n$ in the sum above has at 
least one nonzero (and even) component, so $(p\beta_i-1)!!$ is divisible by 
$p$. Then applying $\nu_p$ to Eq.\,(\ref{P4.6-0-peq3}) we get $\nu_p(1)=0$, 
which is a contradiction. 
\end{proof}

The next proposition is parallel to  
\cite[Proposition $4.10$]{EWZ}. 

\begin{proposition}\label{Propo4.6}
Let $F(x)=c_0M_0+ \sum_{i=1}^d c_i M_i$  
with $M_0=x_1^{k_1}\cdots x_n^{k_n}$ such that $k_1\ge 1$ and $k_1\ge k_j$ for all 
$2\le j\le n$; $c_i\in \C$ $(0\le i\le d)$ with $c_0\ne 0$; and $M_i$ $(1\le i\le d)$ 
are monomials in $x$ that are divisible by $x_1^{k_1+1}$. 
Then $\mathbb E (F^m(X))\ne 0$ for infinitely many $m\ge 1$. 
\end{proposition}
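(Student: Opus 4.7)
The plan parallels Proposition~\ref{Propo4.6-0}: I argue by contradiction using a Frobenius-based $p$-adic analysis, with the role played by the constant term there now played by the distinguished monomial $M_0$. Suppose that $\E(F(X)^m)=0$ for all $m\ge N$. By the Boyarchenko reduction used in the proof of Proposition~\ref{Propo4.6-0}, assume $c_i\in\bar{\Q}$ for every $i$. Since $\Phi(M_0^p)=0$ whenever some $k_j$ is odd and $p$ is an odd prime, I first replace $F$ by $F^2$. Expanding $F^2=c_0^2M_0^2+2c_0M_0\sum_i c_iM_i+(\sum_i c_iM_i)^2$, the new distinguished monomial is $M_0^2=x_1^{2k_1}\cdots x_n^{2k_n}$, whose exponents are all even; every other monomial of $F^2$ has $x_1$-exponent at least $2k_1+1$, and among those with all even exponents at least $2k_1+2$. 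Thus $F^2$ is again of the form assumed in the statement, with the added feature that its distinguished monomial has all even exponents, and $\Phi(F^{2m})=0$ for all $m\ge\lceil N/2\rceil$.

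Next, fix an odd prime $p\ge\lceil N/2\rceil$ with $\nu_p(c_i)=0$ for all $i$ (where $\nu_p$ extends the $p$-adic valuation on $\Z$ to $B:=\Z[c_0,\dots,c_d]$) and $p$ larger than every exponent appearing. Since the multinomial coefficient $\binom{p}{\lambda}$ is divisible by $p$ unless $\lambda$ is concentrated on a single index, Frobenius gives
\[
(F^2)^p\;\equiv\;\sum_\ell \gamma_\ell^{\,p}\,(M'_\ell)^p\pmod{pB[x]},
\]
where $\gamma_\ell$ and $M'_\ell$ range over the coefficients and monomials of $F^2$. Applying $\Phi$ and using $\Phi(x^\alpha)=\prod_j(\alpha_j-1)!!$ for $\alpha\in 2\N^n$ (and $0$ otherwise), together with the Wilson-type congruence
\[
(2p\ell-1)!!\;\equiv\;(-1)^\ell(2\ell-1)!!\,p^\ell\pmod{p^{\ell+1}}
\]
(obtained by pairing the odd residues modulo $p$ and applying Wilson's theorem $(p-1)!\equiv -1\pmod p$), one reads off the leading-order $p$-adic behavior of each summand.

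The critical step, and the main obstacle, is to conclude that $\Phi(F^{2p})$ is nonzero modulo an appropriate power of $p$. If a single monomial $M^*$ of $F^2$ uniquely minimizes $\deg(M^*)/2$ among those with all even exponents, then its contribution dominates and gives an immediate contradiction with $\Phi(F^{2p})=0$; the hypothesis $k_1\ge k_j$ keeps the degrees of the competing monomials under control. When the minimum is not unique—for instance when $M_0^2$ and some other term of $F^2$ share the same total degree—one must refine the analysis, either by extending the Wilson-type congruence to higher order in $p$ or by a Chebotarev-type density argument on the Frobenius action on the $c_i$, so as to choose a prime $p$ for which the relevant leading residue does not vanish modulo $p$. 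Carrying out this refined $p$-adic separation is the key technical step; once it is done, the contradiction with $\Phi(F^{2p})=0$ is immediate.
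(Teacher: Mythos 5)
Your proposal does not close the argument, and you say as much: the ``refined $p$-adic separation'' that you defer is exactly where the content of the proposition lies. Moreover, the dominance criterion you set up is the wrong invariant. After the Frobenius reduction and the (correct) congruence $(2p\ell-1)!!\equiv(-1)^\ell(2\ell-1)!!\,p^\ell\pmod{p^{\ell+1}}$, the $p$-adic valuation of the term contributed by an all-even monomial of $F^2$ is half its \emph{total degree}, whereas the hypotheses of the proposition control only the $x_1$-exponent (each $M_i$ is divisible by $x_1^{k_1+1}$, and $k_1\ge k_j$). These do not force a unique total-degree minimizer. For example, $F=x_1^2x_2^2+c\,x_1^4$ satisfies all the hypotheses with $M_0=x_1^2x_2^2$, yet the three monomials of $F^2$ all have total degree $8$; one computes
$$
\Phi\bigl(F^{2p}\bigr)\equiv\bigl(9+30\,c+105\,c^2\bigr)p^4 \pmod{p^5}
$$
for almost all primes $p$ (when $c\in\Q$), and the leading coefficient vanishes identically for suitable complex $c$. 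So your ``immediate contradiction'' branch does not cover the proposition, and neither of the fallback strategies you mention (higher-order Wilson congruences, a Chebotarev-type argument on the $c_i$) is carried out; it is not clear either one succeeds without reintroducing the $x_1$-grading that your total-degree analysis discards.

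The paper's own proof is a transfer argument: normalize $c_0=1$, replace $F$ by $F^2$ so that $k_1$ is even, and then observe that the proof of Proposition $4.10$ of \cite{EWZ} (stated there for the factorial functional) goes through unchanged for the functional $\Phi(x^\alpha)=(\alpha-1)!!$ of Eq.~(\ref{P4.6-0-peq1}). That argument is built precisely on the structural feature you are not using: in $F^m$ the monomial $M_0^m$ is the \emph{unique} term of minimal $x_1$-degree $mk_1$ (every other term has $x_1$-degree at least $mk_1+1$), and the condition $k_1\ge k_j$ allows the prime to be chosen relative to $mk_1$ so that this term alone has minimal $p$-adic valuation. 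If you want a self-contained proof, you should adapt that $x_1$-graded argument rather than the total-degree Frobenius computation; as written, your proposal identifies the difficulty but does not overcome it.
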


\begin{proof} Replacing $F$ by $c_0^{-1}F$ 
we may assume $c_0=1$ and replacing $F$ by $F^2$ we may assume that $k_1$ is an 
even positive integer. Then under these assumptions 
the proof of \cite[Proposition $4.10$]{EWZ} works through similarly 
for the linear functional $\Phi$ of $\C[x_1, \dots, x_n]$ given in Eq.\,(\ref{P4.6-0-peq1}).
\end{proof} 

\begin{Remark}\label{Rmk4.8}  Note that when $n=1$ the conditions of  
Proposition \ref{Propo4.6} hold automatically for all 
nonzero univariate polynomials $F(x)$. Hence ${\bf GMC}(1)$ also follows 
directly from Proposition \ref{Propo4.6}. 
\end{Remark}

%

\begin{proposition}
Let $d\ge 1$ and $P(x)=\sum_{i=1}^n c_i x_i^d\in \C[x_1, ..., x_n]$ 
for some $c_i\in \C$ $(1\le i\le n)$. 
Assume that $\mathbb E (P^m(X))=0$ for all $m\gg 0$. Then $P=0$. 
In particular, ${\bf GMC}(n)$ holds for $P(x)$.  
\end{proposition}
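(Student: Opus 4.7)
My plan is to consider the exponential moment generating function
$$F(z) := \sum_{m\ge 0}\frac{\E(P(X)^m)}{m!}z^m$$
and exploit the factorization coming from independence of the coordinates $X_i$: formally,
$$F(z) = \prod_{i=1}^n g(c_iz),\qquad g(t) := \sum_{k\ge 0}\frac{\E(X_1^{dk})}{k!}t^k,$$
a formal power series with $g(0)=1$. The hypothesis says exactly that $F(z)$ is a polynomial in $z$, so the task becomes: if $\prod_{i=1}^n g(c_iz)\in\C[z]$, show $c_i=0$ for every $i$.

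Passing to the formal cumulants $\log g(t)=\sum_{k\ge 1}\kappa_k t^k/k!$, one finds
$$\log F(z) = \sum_{k\ge 1}\frac{\kappa_k}{k!}\,p_k(c)\,z^k,\qquad p_k(c):=\sum_{i=1}^n c_i^k.$$
Writing the polynomial $F(z)=\prod_j(1-\rho_j z)$, one also has $\log F(z)=-\sum_{k\ge 1}p_k(\rho)z^k/k$; matching coefficients gives the identity $p_k(\rho)=-(k-1)!\,\kappa_k\,p_k(c)$ for every $k\ge 1$.

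The left side of this identity is a power sum of the finite multiset $\rho$, hence bounded exponentially by $|p_k(\rho)|\le N|\rho|_{\max}^k$. On the right, for $d\ge 2$ the quantity $(k-1)!|\kappa_k|$ grows super-exponentially in $k$: when $d=2$ we have $\kappa_k=2^{k-1}(k-1)!$ explicitly, so $(k-1)!|\kappa_k|\sim((k-1)!)^2$; for $d\ge 3$ even more so, since the moments $(dk-1)!!$ grow like $k^{(d/2-1)k}$, forcing $g(t)$ and hence $\log g(t)$ to have zero radius of convergence. The identity is therefore only consistent when $|p_k(c)|$ decays faster than every exponential along the infinite subsequence $S=\{k:\kappa_k\ne 0\}$ (infinite by Marcinkiewicz's theorem, since $X_1^d$ is non-Gaussian for $d\ge 2$). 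Combined with the fact that $\sum_k p_k(c)z^k=\sum_i c_iz/(1-c_iz)$ is a rational function with radius of convergence $1/\max_i|c_i|$, giving $\limsup_k|p_k(c)|^{1/k}=\max_i|c_i|$, this forces $\max_i|c_i|=0$, i.e.\ $P=0$.

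The most delicate point, and the likely main obstacle, is the case $d$ odd, where the symmetry of $X_1^d$ makes all odd cumulants vanish, so $S$ lies in the even integers and the identity only directly constrains the even power sums $p_{2m}(c)$. I would handle this by substituting $a_i:=c_i^2$ (so $p_{2m}(c)=p_m(a)$) and applying the cumulant-growth analysis to the ``even-part'' series $\tilde g(w):=\sum_\ell\E(X_1^{2d\ell})w^\ell/(2\ell)!$, whose coefficients still grow super-factorially for $d\ge 3$; the same analysis then yields $a_i=0$ and hence $c_i=0$.
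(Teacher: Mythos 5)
Your reduction is clean and genuinely different from the paper's route (the paper simply adapts the $p$-adic argument of \cite[Proposition 4.16]{EWZ} to the functional $\Phi(x^\alpha)=(\alpha-1)!!$, choosing $m$ even and a carefully chosen prime), and the identity $F(z)=\prod_i g(c_iz)$ is correct as a formal power series. But your key identity contains an algebra error that breaks the argument. Matching the coefficient of $z^k$ in $\log F(z)=-\sum_k p_k(\rho)z^k/k$ against $\sum_k \kappa_k\,p_k(c)\,z^k/k!$ gives
\[
p_k(\rho)=-\frac{\kappa_k}{(k-1)!}\,p_k(c),
\]
not $-(k-1)!\,\kappa_k\,p_k(c)$; you are off by a factor of $((k-1)!)^2$. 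This is fatal for $d=2$: there $\kappa_k=2^{k-1}(k-1)!$, so the corrected identity reads $p_k(\rho)=-2^{k-1}p_k(c)$, both sides are merely exponential, and the growth comparison yields only $\max_j|\rho_j|=2\max_i|c_i|$ --- no contradiction. (The $d=2$ case is still provable by your generating-function method, but via a different mechanism, e.g.\ $F(z)^2=\prod_i(1-2c_iz)^{-1}$ cannot be a nonconstant polynomial, or a sign comparison of residues in $\sum_j\frac{\rho_jz}{1-\rho_jz}=-\frac12\sum_i\frac{2c_iz}{1-2c_iz}$.) For $d\ge 3$ the corrected factor $\kappa_k/(k-1)!$ is still super-exponential along a subsequence (since $g$ has zero radius of convergence, $\log g$ cannot have all coefficients exponentially bounded), so your strategy can survive there, but a second gap remains: $\limsup_k|p_k(c)|^{1/k}=\max_i|c_i|$ guarantees non-decay only along \emph{some} subsequence, while your contradiction needs non-decay along the specific set $S$ where $|\kappa_k|/(k-1)!$ is large. ``$S$ infinite'' is not enough (the power sums of $(1,-1)$ vanish on all odd $k$); you must actually align the two subsequences, e.g.\ by showing $S$ contains all large even $k$ and using $p_{2m}(c)=p_m(c_1^2,\dots,c_n^2)$ as you sketch for odd $d$.

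Separately, your argument says nothing about $d=1$, where $\kappa_k=0$ for $k\ne 2$ and the identity only yields $p_2(c)=0$ --- and this is not a defect you can repair, because the first assertion of the proposition is in fact false as stated for $d=1$ over $\C$: for $P=x_1+ix_2$ one has $\E(P(X)^m)=2^{m/2}\E(Z_1^m)=0$ for all $m\ge 1$ by the computation in Proposition~\ref{prop:GMCisSIC}, yet $P\ne 0$ (only the weaker ``${\bf GMC}(n)$ holds for $P$'' survives). So any correct treatment must either assume $d\ge 2$ or handle $d=1$ by a different statement; your write-up should at least acknowledge this, and the discrepancy is worth flagging to the authors.
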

 
This proposition can be proved similarly as Proposition $4.16$ in 
\cite{EWZ} if we choose the integer $m$ there to be even, and 
the prime $p$ to be $(m+2)d-1$ or $(m+1)d-1$, depending $d$ is odd or even, 
respectively. Note that the components $k_i$'s in the proof of Proposition $4.16$ in 
\cite{EWZ} for our case must be even when $m$ is chosen to be even.    

\renewcommand{\theequation}{\thesection.\arabic{equation}}
\setcounter{equation}{0}

\section{\bf Moment Vanishing Polynomials}

 Let again $X=(X_1,\dots,X_n)$ be a random vector with joint Gaussian distribution.
 For $n\geq 2$, there exist many polynomials $P(x)\in \C[x]$ for which $\E(P(X)^m)=0$ for all $m\geq 1$: if $0$ lies in the  closure of the $O(n)$ orbit of $P(x)$, then $\E(P(x)^m)=0$ for all $m\geq 1$.
 Indeed, if there exists a sequence of orthogonal matrices $A_1,A_2,\dots$
 such that $\lim_{k\to\infty} P(A_k(x))=0$, then we have $\E(P(X))=\lim_{k\to\infty} \E(P(A_k(X)))=\E(\lim_{k\to \infty} P(A_k(X)))=\E(0)=0$.
 A 1-parameter subgroup is a homomorphism $\lambda:\C^\star \to {\rm O}_n(\C)$ of algebraic groups. We can view $\lambda$ as an orthogonal matrix
 with entries in $\C[t,t^{-1}]$. If $P(\lambda(t)(x))$ lies in $t\C[t][x]$, then $\lim_{t\to0}P(\lambda(t)x)=0$ and $0$ lies in the closure of the $O_n(\C)$ orbit of $P(x)$.
Conversely, the Hilbert-Mumford criterion states that if $0$ lies in the ${\rm O}_n(\C)$-orbit closure of $P(x)$, then there exists such a $1$-parameter subgroup 
$\lambda:\C^\star\to {\rm O}_n(\C)$
such that $P(\lambda(t)(x))\in t\C[t][x]$. If $Q(x)\in \C[x]$, then for large $m$, $Q(\lambda(t)(x))P(\lambda(t)x)^m\in t\C[t][x]$ and
$$
\E(Q(X)P(X)^m)=\E(\lim_{t\to 0} Q(\lambda(t)(X))P(\lambda(t)X)=\E(0)=0.
$$
We make the following conjecture:
\begin{conjecture}
If $\E(P(X)^m)=0$ for all $m\geq 1$, then there exists a 1-parameter subgroup $\lambda:\C^\star\to {\rm O}_n(\C)$ such that $P(\lambda(t)(x))\in t\C[t][x]$.
\end{conjecture}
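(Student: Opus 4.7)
My plan is to attack the conjecture via Hilbert--Mumford and the geometric invariant theory of the ${\rm O}_n(\C)$-action on polynomials. Fix a bound $d$ on the degree of $P$ and regard $P$ as a point in the finite-dimensional ${\rm O}_n(\C)$-representation $W_d := \bigoplus_{k=0}^d S^k((\C^n)^\ast)$. For each $m\geq 1$ the map $\mu_m : W_d \to \C$ defined by $\mu_m(P) = \E(P(X)^m)$ is a polynomial function of degree $m$ in the coefficients of $P$; since the standard Gaussian measure on $\R^n$ is ${\rm O}_n(\R)$-invariant and ${\rm O}_n(\R)$ is Zariski-dense in ${\rm O}_n(\C)$, each $\mu_m$ is an ${\rm O}_n(\C)$-invariant polynomial function on $W_d$.

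The excerpt already contains one direction of the Hilbert--Mumford dictionary: the existence of the desired 1-parameter subgroup is equivalent to the geometric condition $0 \in \overline{{\rm O}_n(\C)\cdot P}$, i.e., to $P$ lying in the nullcone $N_d$ of $W_d$. By the GIT Nullstellensatz, this is in turn equivalent to the vanishing at $P$ of every positive-degree homogeneous ${\rm O}_n(\C)$-invariant on $W_d$. Thus the conjecture reduces to the purely invariant-theoretic assertion that the ${\rm O}_n(\C)$-invariant subvariety
$$Z := \{P \in W_d : \mu_m(P) = 0 \text{ for all } m \geq 1\}$$
is contained in $N_d$; equivalently, every positive-degree homogeneous element of the invariant ring $\C[W_d]^{{\rm O}_n(\C)}$ must lie in the radical of the ideal generated by $\mu_1,\mu_2,\dots$.

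To attempt this, I would first expand $P$ in the orthogonal Hermite basis $\{H_\alpha\}$ and use Wick's theorem to obtain a combinatorial formula for $\mu_m(P)$ in terms of the Hermite coefficients. I would then polarize: evaluating the moments on the Zariski-dense ${\rm O}_n(\C)$-orbit of $P$ produces the richer family of numerical invariants $\E(P_1(X)^{m_1}\cdots P_r(X)^{m_r})$ for translates $P_j = g_j\cdot P$, and one can try to extract from these identities enough relations to recover, up to radical, each generator of $\C[W_d]^{{\rm O}_n(\C)}$ on $Z$. The analogue of this radical-generation step is also what underlies the passage from Mathieu's conjecture $\operatorname{\bf MC}(\SL_n(\C)/\GL_{n-1}(\C))$ to the Jacobian conjecture quoted in the excerpt, so techniques from that setting are the most natural source of input.

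The hardest step, and what I expect to be the genuine obstacle, is exactly this radical membership. For each $m$ there is only a single moment invariant $\mu_m$ of degree $m$, whereas the graded piece $S^m(W_d^\ast)^{{\rm O}_n(\C)}$ can have large dimension, so the $\mu_m$ cannot span the invariant ring and the argument must rely on non-linear relations coming from the specific arithmetic of the Gaussian distribution. The Mathieu conjecture supplies an analogous nullcone-containment in the compact/Haar setting, but the Gaussian measure is not a Haar measure of any compact group acting linearly on $\C^n$, so a direct transfer seems unlikely; some genuinely new input, probably tied to the Hermite structure and to the moment generating function $\sum_m \tfrac{s^m}{m!}\mu_m(P) = \E(e^{sP(X)})$, appears to be needed.
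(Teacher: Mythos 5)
This statement is not proved in the paper: it is stated as an open conjecture, and the only thing the paper establishes is the easy converse direction (if $0$ lies in the ${\rm O}_n(\C)$-orbit closure of $P$, then all moments $\E(P(X)^m)$ vanish, and moreover $\E(Q(X)P(X)^m)=0$ for $m\gg 0$). So there is no ``paper proof'' to compare yours against, and your proposal should be judged on its own as an attempted proof of an open problem.

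As such, your write-up is a correct and useful \emph{reformulation}, but not a proof, and you say so yourself. The setup is sound: $\mu_m(P)=\E(P(X)^m)$ is a degree-$m$ polynomial in the coefficients of $P\in W_d=\bigoplus_{k=0}^d S^k((\C^n)^\ast)$, it is ${\rm O}_n(\R)$-invariant by rotation-invariance of the Gaussian measure, hence ${\rm O}_n(\C)$-invariant by Zariski density of the compact real form; and by Hilbert--Mumford together with the graded Nullstellensatz, the conjecture is exactly the assertion that the common zero locus of $\{\mu_m\}_{m\ge 1}$ is contained in the nullcone of $W_d$, i.e.\ that every positive-degree homogeneous invariant lies in the radical of the ideal generated by the $\mu_m$. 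This is a faithful translation of the conjecture into invariant-theoretic language (and it makes precise why the paper's Hilbert--Mumford discussion only yields one implication). But the genuine mathematical content --- the radical membership --- is precisely where your argument stops: you observe that the single invariant $\mu_m$ in each degree cannot linearly span $S^m(W_d^\ast)^{{\rm O}_n(\C)}$ and that some arithmetic input from the Gaussian/Hermite structure would be needed, and you offer no mechanism to supply it. The polarization idea ($\E(P_1^{m_1}\cdots P_r^{m_r})$ for translates $P_j=g_j\cdot P$) does not obviously produce new invariants vanishing on $Z$, since the hypothesis only controls $\E(P^m)$ for the single polynomial $P$, not mixed moments of its translates. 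So the proposal identifies the right target but leaves the conjecture exactly as open as the paper does; it should be presented as a reduction, not a proof.
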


  \end{document}